\documentclass[12pt]{article}

\usepackage{amsmath}
\usepackage{amsthm}
\usepackage{amssymb}
\usepackage{graphicx}
\usepackage{mathtools}
\usepackage{times}
\usepackage{color}
\usepackage{url}

\usepackage{caption}
\usepackage{subcaption}



\topmargin -1cm
\oddsidemargin 0.2cm
\textwidth 16cm
\textheight 22cm
\footskip 1.0cm

\newcommand{\mathsmaller}{}
\newcommand{\parencite}[1]{\cite{#1}}
\newcommand{\textcite}[1]{\cite{#1}}
\newcommand{\leaveout}[1]{}

\DeclareMathOperator{\conv}{conv}

\newcommand{\gesem}{\succeq}

\newcommand{\suchthat}{:}
\newcommand{\tran}{^T}
\newcommand{\set}[1]{\{#1\}}

\newcommand{\Fcal}{{\cal F}}
\newcommand{\Fcalprime}{\Fcal^\prime}
\newcommand{\lam}{\lambda}
\newcommand{\Qhat}{{\hat Q}}
\newcommand{\Rbb}{\mathbb{R}}
\newcommand{\xbar}{{\bar x}}
\newcommand{\ybar}{{\bar y}}
\newcommand{\xhat}{{\hat x}}
\newcommand{\yhat}{{\hat y}}
\newcommand{\xstar}{{x^*}}
\newcommand{\ystar}{{y^*}}

\newcommand{\lx}{{l_x}}
\newcommand{\ly}{{l_y}}
\newcommand{\lz}{{l_z}}
\newcommand{\ux}{{u_x}}
\newcommand{\uy}{{u_y}}
\newcommand{\uz}{{u_z}}

\newtheorem{lemma}{\samb{Proposition}}

\newcommand{\samb}[1]{{#1}}

\title{Convex Hull Representations for\\Bounded Products of Variables}

\author{
Kurt M. Anstreicher\thanks{Department of Business Analytics, Tippie College of Business, University of Iowa, Iowa City, IA 52242. {\tt kurt-anstreicher@uiowa.edu}},\ \
Samuel Burer\thanks{Department of Business Analytics, Tippie College of Business, University of Iowa, Iowa City, IA 52242. {\tt samuel-burer@uiowa.edu}}\ \
and Kyungchan Park\thanks{Department of Business Analytics, Tippie College of Business, University of Iowa, Iowa City, IA 52242. {\tt kyungchan-park@uiowa.edu}}
}

\date{\today}


\begin{document}

\baselineskip22pt

\maketitle

\begin{abstract}
    It is well known that the convex hull of $\set{(x,y,xy)}$, where $(x,y)$ is constrained to lie in a box, is given by the Reformulation-Linearization Technique (RLT) constraints. Belotti {\em et al.\,}(2010) and Miller {\em et al.\,}(2011) showed that
    if there are additional upper and/or lower bounds on the product $z=xy$, then the convex hull can be represented by adding an infinite family of inequalities, requiring a separation algorithm to implement. Nguyen {\em et al.\,}(2018) derived convex hulls with bounds on $z$ for the more general case of $z=x^{\mathsmaller{b_1}} y^{\mathsmaller{b_2}}$, where $b_1\ge 1$, $b_2\ge 1$. We focus on the most important case where $b_1=b_2=1$ and show that the convex hull with \samb{either} an upper bound or lower bound on the product is given by RLT constraints, the bound on $z$ and a single Second-Order Cone (SOC) constraint.  With \samb{both} upper and lower bounds on the product, the convex hull can be represented using no more than three SOC constraints, each applicable o\samb{n} a subset of $(x,y)$ values. In addition to the convex hull characterizations, volumes of the convex hulls with \samb{either} an upper or lower bound on $z$ are calculated and compared to the relaxation that imposes only the RLT constraints. As an application of these volume results, we show how spatial branching can be applied to the product variable so as to minimize the sum of the volumes for the two resulting subproblems.

\medskip\noindent{\bf Keywords:}
     Convex Hull, Second-Order Cone, Bilinear Product, Global Optimization.
\end{abstract}

\newpage
\section{Introduction}

Representing the product of two variables is a fundamental problem
in global optimization. This issue arises naturally in the presence
of bilinear terms in the objective and/or constraints, and also
when more complex functions are decomposed in\samb{to} factorable
form by global optimization algorithms such as BARON \cite{BARON}.
It is well known \textcite{Al-Khayyal-Falk} that the convex hull
of $(x,y,xy)$ where $(x,y)$ \samb{lie} in a box is given by the
four Reformulation-Linearization Technique (RLT) constraints
\parencite{RLT,RLTorigin}, also often referred to as the McCormick
inequalities. Linderoth \textcite{Linderoth} derived the convex
hulls of bilinear functions over triangles and showed that they have
Second-Order Cone (SOC) \cite{Convex} representations. \samb{Dey et
al.~\cite{DeySantanaWang} show that the convex hull of $(x,y,xy)$ over
the box intersected with a bilinear equation is SOC representable.} The
convex hull for the complete 5-variable quadratic system that arises
from 2 original variables in a box was considered in \cite{AB} and
\cite{Weismantel}. Explicit functional forms for the convex hull that
apply over a dissection of the box are given in \cite{Weismantel}, while
\textcite{AB} shows that the convex hull can be represented using the
RLT constraints and a PSD condition.

The focus of this paper is to consider the convex hull of $(x,y,xy)$
when $(x,y)$ \samb{lie} in a box and there are explicit upper
and/or lower bounds on the product $xy$. More precisely, we wish to
characterize the convex hull of $$\Fcalprime:=\set{(x,y,z)\suchthat
z=xy,\, \lx\le x\le \ux,\, \ly\le y\le\uy,\, \lz\le z\le \uz}$$ where
\samb{$0 \le (\lx, \ly, \lz) < (\ux,\uy,\uz)$}. \samb{We assume
$\Fcalprime \ne \emptyset$, i.e., that $\lx \ly \le \lz < \uz \le \ux
\uy$.} \samb{When $\lx \ly < \lz$, we say that the lower bound $\lz$ on
$z$ is {\em non-trivial\/} and similarly for the upper bound when $\uz <
\ux \uy$.} By a simple rescaling, we can transform the feasible region to
have $\ux=\uy=1$, and we will make this assumption throughout.
\samb{Note also that if $z=xy$ and ${\ly}>0$ then $x\le {\uz}/{\ly}$,
so we could assume that ${\ux}\le {\uz}/{\ly}$. Then ${\ux}=1$ means
we can assume ${\ly}\le {\uz}$, and similarly ${\lx}\le {\uz}$. In
addition $x\ge {\lz}/{\uy}$, so ${\uy}=1$ implies that we may assume
that ${\lx}\ge {\lz}$ and similarly ${\ly}\ge {\lz}$. Combining these
facts, we could assume that
\begin{equation}\label{eq:bound_assumptions}
 {\lz}\le {\lx}\le {\uz},\quad {\lz}\le
{\ly}\le {\uz}.
\end{equation}
Said differently, if $\lx$ and $\ly$ do not satisfy \eqref{eq:bound_assumptions},
we can adjust them so that they do. However, we do not explicitly assume that
\eqref{eq:bound_assumptions} holds until Section \ref{SEC:Gen}.}

The problem of characterizing $\conv(\Fcalprime)$ has been considered
in several previous works. Bellotti {\em et.al.} \cite{belotti} and
Miller {\em et.al.} \cite{belotti2} show that $\conv(\Fcalprime)$
can be represented by the RLT inequalities, bounds on $z$ and {\em
lifted tangent inequalities}, which we describe in \samb{Section
\ref{SEC:LTI}}. Since the lifted tangent inequalities belong
to an infinite family, they require a separation algorithm to
implement. The convex hull for a generalization of $\Fcalprime$ where
$z=x^{b_1}y^{b_2}$, $b_1\ge1,b_2\ge1$ is considered in \cite{Mohit}.
There are two primary differences between this paper and \cite{Mohit}.
First, because \cite{Mohit} considers a more general problem, both the
analysis required and the representations obtained are substantially
more complex than our results here. In particular, we will show that
with $b_1=b_2=1,$ the convex hull of $\Fcalprime$ can always be
represented using linear inequalities and SOC constraints, although in
some cases the derivations of \samb{the} SOC forms for these constraints
is nontrivial. A second difference is that \cite{Mohit} assumes
$\lx>0$, $\ly>0$. In \cite{Mohit} it is stated that this assumption is without
loss of generality, since by a limiting argument positive lower bounds
could be reduced to zero. This is true, but \cite{Mohit} goes on to
assume that $\lx=\ly=1$, making representations for the important
case of $\lx=0$ and/or $\ly=0$ difficult to extract from the results.
\samb{Another recent, related paper by Santana and Dey \cite{SantanaDey}
shows that $\conv(\Fcalprime)$ is SOC representable using a disjunctive
representation in a lifted space; see section 4 therein. In contrast, we
will show that $\conv(\Fcalprime)$ is SOC representable directly in the
variables $(x,y,z)$.}

In \samb{Section \ref{SEC:lb0}}, we consider the case where $\lx=\ly=0$ and there are \samb{non-trivial} upper and/or lower bounds on the product
variable $z$.  Our methodology for obtaining explicit representations for $\conv(\Fcalprime)$ is based on the lifted
tangent inequalities of \cite{belotti,belotti2}. We do not use the inequalities {\em per se}, but rather show how the process
by which they are constructed can be re-interpreted to generate nonlinear inequalities.  We show that in all cases these
inequalities can be put into the form of SOC constraints, so that $\conv(\Fcalprime)$ is SOC-representable \cite{Convex}.  In the
presence of \samb{both non-trivial} upper and lower bounds on $z$, the representation requires a dissection of the domain of $(x,y)$ values
into three regions, each of which uses a different SOC constraint to obtain the convex hull.  One of the three SOC
constraints is globally valid, and the use of this one constraint together with the RLT constraints and bounds on $z$
\samb{empirically gives} a close approximation of $\conv(\Fcalprime)$.  Finally we compute the
volumes of $\conv(\Fcalprime)$ as given \samb{in the case where there is either a non-trivial upper or non-trivial lower bound
on $z$} using an SOC constraint, the RLT constraints and bound on $z$, and compare these volumes to the volumes of the regions where the SOC constraint is omitted.  This comparison is
similar to the volume computations in \cite{SDPRLT}, where the effect of adding a PSD condition to the RLT constraints was considered. An
interesting application of these computations is to consider the reduction in volume associated with spatial branching
\cite{Spatial} based on the product variable $z$.

In Section \ref{SEC:Gen} we generalize the results of Section
\ref{SEC:lb0} to consider \samb{positive} lower bounds on $(x,y)$,
\samb{specifically} bounds $\lx\ge0$, $\ly\ge0$. We again show
that in all cases $\conv(\Fcalprime)$ is SOC-representable. As in
the case of $\lx=\ly=0$, when there is \samb{a non-trival} upper
or \samb{a non-trivial} lower bound on the product, but not both,
the representation of $\conv(\Fcalprime)$ requires only a single
SOC constraint in addition to the RLT constraints and bound on $z$.
\samb{When there are both non-trivial} lower and upper bounds on
$z$ there are several cases to consider, \samb{again} requiring up
to three SOC constraints, each applicable o\samb{n} a subset of the
domain of $(x,y)$. We close the paper in Section \ref{SEC:Conclusion}
with a summary of the results and some promising directions for future
research.

\section{\samb{Lifted Tangent Inequalities}} \label{SEC:LTI}

The set $\Fcal = \set{(x,y,z)\suchthat z=xy,\, {\lx}\le x\le {\ux},\, {\ly}\le y \le {\uy}}$, \samb{i.e., $\Fcalprime$ with only trivial bounds on $z$}, is not convex, but it
is well known that $\conv(\Fcal)$
 is the linear envelope of four extreme points \parencite{Al-Khayyal-Falk}. This linear envelope can be given by the four RLT constraints \parencite{RLTorigin}:
\begin{subequations}\label{EQ:RLT}
    \begin{align}
        z&\geq {\uy} x+{\ux} y-{\ux} {\uy}, \label{EQ:RLTback}\\
        z&\geq {\ly} x +{\lx} y -{\lx} {\ly}, \label{EQ:RLTbottom}\\
        z&\leq {\uy} x +{\lx} y-{\lx} {\uy}, \label{EQ:RLTx}\\
        z&\leq {\ly} x+{\ux} y-{\ux} {\ly}. \label{EQ:RLTy}
    \end{align}
\end{subequations}
Figure \ref{FG:Target} shows the product $xy$ as a colored surface and the boundary edges for the linear envelope as red lines for the case where ${\lx}={\ly}=0$ and ${\ux}={\uy}=1$.

\begin{figure}[h!]
    \centering
       \includegraphics[height=3.5in]{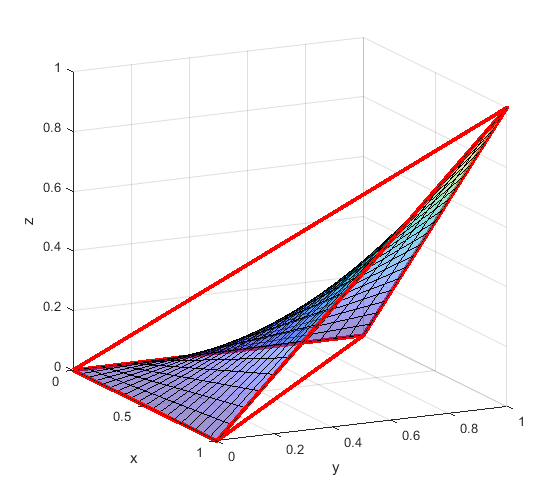}
       \caption{Convex hull with no bounds on $z$}
       \label{FG:Target}
\end{figure}

The focus in this paper is to represent the
convex hull of the set $\Fcalprime$, corresponding to $\Fcal$ with nontrivial upper and lower bounds on the product variable $z$.   \samb{Recall that we} assume ${\ux}={\uy}=1$ throughout.
It is shown in \cite{belotti,belotti2} that the convex hull of $\Fcalprime$ is given by the RLT constraints, bounds on $z$, and {\em lifted tangent inequalities}.
Our technique for deriving convex hull representations \samb{in Sections \ref{SEC:lb0} -- \ref{SEC:Gen}} is based on the construction of these lifted tangent
inequalities, which we now describe.  Assume that $l=({\lx},{\ly},{\lz})$ and $u=(1,1,{\uz})$.
The construction of a lifted tangent inequality based on a point $(\xstar,\ystar,{\lz}) \samb{\in \Fcalprime}$ proceeds as follows.  The inequality tangent to the curve $xy={\lz}$ at $(\xstar,\ystar)$ has the form
\[
\ystar(x-\xstar)+\xstar(y-\ystar)\ge 0.
\]
This inequality is lifted to an inequality in the variables $(x,y,z)$ of the form
\begin{equation}\label{EQ:LTI}
\ystar(x-\xstar)+\xstar(y-\ystar)+a(z-{\lz})\ge 0,
\end{equation}
\samb{with $a < 0$.} The value of $a$ is chosen so that there is a point $(\xbar,\ybar,{\uz}) \samb{\in \Fcalprime}$ such that
the inequality \eqref{EQ:LTI} is tight at $(\xbar,\ybar,{\uz})$, and \eqref{EQ:LTI} is valid for ${\Fcalprime}$. There are two possibilities for such a point:
\begin{itemize}
\item $\xbar=\rho\xstar$, $\ybar=\rho\ystar$, where $\rho=\sqrt{{\uz}/{\lz}}$. In this case the value of $a$ is independent of $(\xstar,\ystar)$; there is an expression for $a$ that depends only on ${\lz}$ and ${\uz}$ \cite{belotti,belotti2}.
\item $(\xbar,\ybar)$ corresponds to one of the endpoints of the curve $xy={\uz}$ for the given bounds
on $x$ and $y$. In the case of ${\uz}=1$, this point is $\xbar=\ybar=1$.
\end{itemize}

The construction of a lifted tangent inequality can alternatively \samb{start} with a point $(\xbar,\ybar,u_z) \in$ $\Fcalprime$. In this case the roles of $(\xstar,\ystar)$ and $(\xbar,\ybar)$ are reversed,
and either $(\xstar,\ystar)=(1/\rho)(\xbar,\ybar)$ or $(\xstar,\ystar)$ is an endpoint of the curve $xy={\lz}$ for
the bounds on $x$ and $y$. If ${\lx}{\ly}={\lz}$ then this point is $({\lx},{\ly},{\lz})$; for example if ${\lx}={\ly}={\lz}=0$, the point is $(0,0,0)$.

In all cases the result of the above process is an inequality that is valid for $\Fcalprime$, and which is tight for a line segment joining
two points $(\xstar,\ystar,{\lz}) \samb{\in \Fcalprime}$ and $(\xbar,\ybar,{\uz}) \samb{\in \Fcalprime}$. Our approach does not use the lifted tangent inequalities themselves but is rather based on
the process for constructing them.  In particular, starting with a point $(x,y)$ with $x\in[{\lx},1]$, $y\in[{\ly},1]$,
${\lz} < xy <{\uz}$, we determine the two points $(\xstar,\ystar,\samb{\lz}) \samb{\in \Fcalprime}$ and $(\xbar,\ybar,\samb{\uz}) \samb{\in \Fcalprime}$ so that the lifted tangent
inequality that is tight at $(x,y,z)$ is tight for the line segment joining $(\xstar,\ystar,{\lz})$ and $(\xbar,\ybar,{\uz})$.
Suppose that $0\le\alpha\le 1$ is such that
$(x,y)=\alpha(\xstar,\ystar)+(1-\alpha) (\xbar,\ybar)$.
Then the constraint $z\le\alpha {\lz}+(1-\alpha){\uz}$ is valid and tight on the line segment between
$(\xstar,\ystar,{\lz})$ and $(\xbar,\ybar,{\uz})$.
If $\alpha$ can be expressed as a function of
$(x,y)$ then the result is a single nonlinear constraint that \samb{is equivalent to} a family of lifted tangent inequalities.  Our goal will be to obtain such a constraint and show that it can always be expressed in the form of an SOC constraint.  An SOC constraint obtained in this manner is certainly
valid over the $\set{(x,y)}$ domain on which it is derived, since it is equivalent to the lifted tangent inequalities on that domain.  In some cases we obtain SOC constraints that are actually globally valid, that is, valid for all $(x,y,z)\in\conv(\Fcalprime)$.

\section{Convex hull representation with $\lx=\ly=0$}\label{SEC:lb0}

In this section
we obtain representations for $\conv(\Fcal^\prime)$ when ${\lx}={\ly}=0$ and ${\ux}={\uy}=1$.
We begin by considering the case where ${\lz}=0$, ${\uz}<1$, and next consider the case where
${\lz}>0$, ${\uz}=1$.  In both of these cases we show that a combination of the RLT constraints, the bound
on $z$, and a single SOC constraint gives the convex hull of $\Fcal^\prime$.  In the case where ${\lz}>0$ and
${\uz}<1$ we show that the convex hull of $\Fcal^\prime$ is representable using three SOC constraints, each applicable on a subset of the domain in $(x,y)$. One of these SOC constraints is globally valid, and the combination
of that single SOC constraint, the RLT constraints and the bounds on $z$ \samb{empirically} gives a close approximation of
$\conv(\Fcalprime)$.

\subsection{\samb{Non-trivial} upper bound on $xy$ with $\lx=\ly=0$} \label{SS:UB}

We first consider the case where $x\in[0,1]$, $y\in[0,1]$ and we impose a \samb{non-trivial} upper bound
on the product $z \le {\uz}$.

\begin{lemma}\label{LEM:UB}
Let $l=(0,0,0)$, $u=(1,1,{\uz})$ where $0<{\uz}<1$. Then $\conv(\Fcalprime)$ is given by the RLT constraints
\eqref{EQ:RLT}, the bound $z\le {\uz}$ and the SOC constraint $z^2\le {\uz}xy$.
\end{lemma}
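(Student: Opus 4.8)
The plan is to let $P$ denote the set defined by the RLT constraints \eqref{EQ:RLT}, the bound $z\le\uz$, and the constraint $z^2\le\uz xy$, and to prove $\conv(\Fcalprime)=P$ by establishing the two inclusions separately. First I would record that with $\lx=\ly=0$ and $\ux=\uy=1$ the RLT constraints \eqref{EQ:RLT} read $z\ge x+y-1$, $z\ge 0$, $z\le x$, and $z\le y$, and that $z^2\le\uz xy$ is a rotated second-order cone constraint in the variables $(z,\uz x,y)$ over the region $x,y\ge 0$; hence $P$ is convex and SOC-representable.

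The inclusion $\conv(\Fcalprime)\subseteq P$ is the routine direction: since $P$ is convex it suffices to check each defining inequality at a generic point $(x,y,xy)\in\Fcalprime$. The RLT inequalities are the usual McCormick bounds, $z\le\uz$ holds by definition of $\Fcalprime$, and for the cone we have $z^2=(xy)^2=xy\cdot xy\le\uz\,xy$ because $xy\le\uz$ on $\Fcalprime$.

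The substance is the reverse inclusion $P\subseteq\conv(\Fcalprime)$. I would exploit the fact that, for fixed $(x,y)$, the slice $\set{z\suchthat(x,y,z)\in\conv(\Fcalprime)}$ is an interval; consequently it suffices to show that the two endpoints of the corresponding slice of $P$, namely $z_{\min}=\max(0,x+y-1)$ and $z_{\max}=\min(x,y,\uz,\sqrt{\uz xy})$, each arise from a point of $\conv(\Fcalprime)$ (the cone contributes only the upper bound $z\le\sqrt{\uz xy}$, since $z\ge 0$). For the lower endpoint I would exhibit $(x,y,z_{\min})$ as a convex combination of the $\Fcalprime$-points $(1,0,0)$, $(0,1,0)$, $(1,\uz,\uz)$, $(\uz,1,\uz)$ and the origin, all of which lie on the facet $z=x+y-1$ or in the plane $z=0$. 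For the upper endpoint I would partition the $(x,y)$-domain according to which term attains the minimum: on $y\le\uz x$ the value is $y$, realized inside the triangle spanned by $(0,0,0)$, $(1,0,0)$, $(1,\uz,\uz)$; symmetrically on $x\le\uz y$ the value is $x$; on $xy\ge\uz$ the value is $\uz$, realized in the convex hull of the hyperbola arc $\set{(\xbar,\ybar,\uz)\suchthat\xbar\ybar=\uz}$; and on the remaining central region where $\sqrt{\uz xy}$ is the minimum, I would use the segment from the origin $(0,0,0)$ to the point $(\xbar,\ybar,\uz)=(\sqrt{\uz x/y},\sqrt{\uz y/x},\uz)$, along which $z^2=\uz xy$ holds with equality, so that $z=\sqrt{\uz xy}$ is attained by a convex combination of two $\Fcalprime$-points.

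The main obstacle I anticipate is organizing this last step cleanly: verifying that the four regions partition the projection $\set{(x,y)\in[0,1]^2\suchthat x+y\le 1+\uz}$, that the piecewise definition of $z_{\max}$ is continuous across the boundaries $y=\uz x$, $x=\uz y$, and $xy=\uz$, and---most importantly---that the endpoints of each representing segment genuinely lie in $\Fcalprime$ (in particular that $\xbar,\ybar\le 1$ holds precisely on the central region). Once these geometric bookkeeping facts are in place, the slice argument upgrades the two pointwise containments to $P\subseteq\conv(\Fcalprime)$, completing the proof.
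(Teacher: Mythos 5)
Your proposal is correct, and it takes a genuinely different route from the paper. The paper's proof does not establish the reverse inclusion from scratch: it invokes the theorem of Belotti et al.\ and Miller et al.\ that $\conv(\Fcalprime)$ is exactly the region cut out by the RLT constraints, the bounds on $z$, and the infinite family of lifted tangent inequalities, and then reduces that family to one constraint. Specifically, each lifted tangent inequality here has the form $\frac{\uz}{t}x+ty-2z\ge 0$ for $\uz\le t\le 1$ and is tight precisely on the segment joining $(t,\uz/t,\uz)$ to the origin, on which $z^2=\uz xy$ holds with equality; hence $z^2\le\uz xy$ implies every member of the family (via $2z\le 2\sqrt{\uz xy}\le \frac{\uz}{t}x+ty$), and it is valid since $z=xy\le\uz$ gives $z^2\le\uz xy$. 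You instead prove $P\subseteq\conv(\Fcalprime)$ directly by the slice/endpoint argument, exhibiting each boundary point of $P$ as an explicit convex combination of points of $\Fcalprime$. The geometric core is the same: your central-region segment from $(0,0,0)$ to $(\sqrt{\uz x/y},\sqrt{\uz y/x},\uz)$, on which $z^2=\uz xy$ is tight, is exactly the paper's tight segment with $t=\sqrt{\uz x/y}$, and your constraint that $\xbar,\ybar\le 1$ characterizes the central region is exactly the condition $\uz\le t\le 1$ on the tangency point. The difference is what surrounds that core: the paper absorbs the lower envelope and the regions where $z_{\max}$ equals $x$, $y$, or $\uz$ into the cited convex-hull characterization, while you verify them by hand (and your region computations do check out: $y\le\uz x$ forces $y\le\min(x,\uz,\sqrt{\uz xy})$, $xy\ge\uz$ forces $\uz$ to be the minimum, the quadrilateral $(1,0,0),(0,1,0),(1,\uz,\uz),(\uz,1,\uz)$ is coplanar on $z=x+y-1$ and its projection covers the trapezoid $1\le x+y\le 1+\uz$, and the RLT constraints together with $z\le\uz$ confine the projection of $P$ to $x+y\le 1+\uz$). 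What your approach buys is self-containedness---no appeal to the lifted-tangent-inequality machinery of \cite{belotti,belotti2}---at the cost of a four-region case analysis; the paper's proof is a few lines but rests entirely on the external theorem, which is also what it leans on for all the later propositions.
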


\begin{proof}
From \cite{belotti, belotti2} the convex hull of $\Fcal^\prime$ is given by the RLT constraints, bounds on $z$ and the lifted tangent inequalities.  In this
case each lifted tangent inequality is obtained by taking a point $(\xbar,\ybar)=(t,{\uz}/t)$ with ${\uz}\le t \le 1$, forming the
tangent equation to $xy={\uz}$ at $(t,{\uz}/t)$, and lifting this tangent equation to form a valid inequality of the form
\begin{equation}\label{EQ:LTEupper}
\frac{{\uz}}{t}x + ty -2z \ge 0.
\end{equation}
The set of points in $\Fcal^\prime$ that satisfy \eqref{EQ:LTEupper} with equality then consists of the line segment
joining the points $(t,{\uz}/t,{\uz})$ and $(0,0,0)$.
The constraint $z^2\le {\uz}xy$ holds with equality for all such points, and therefore implies all of the lifted tangent
inequalities. Moreover, $z^2\le \uz xy$ clearly holds for all $(x,y,z)\in\conv(\Fcalprime)$, since if
$z=xy\le \uz$ then $z^2=(xy)^2 \le\uz xy$.
\end{proof}

\begin{figure}
    \centering
    \includegraphics[height=3in]{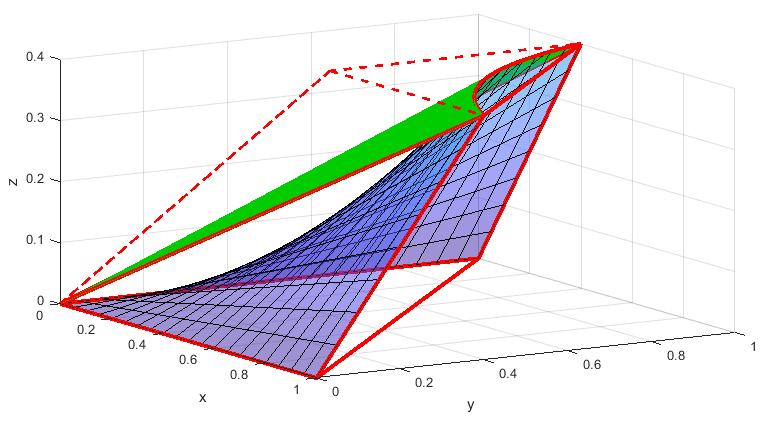}
   \caption{Convex hull with upper bound on $z$ }
   \label{FG:UB}
\end{figure}

In Figure \ref{FG:UB}, we illustrate $\conv(\Fcalprime)$ for the case of $l=(0,0,0)$, $u=(1,1,0.4)$. The green surface illustrates the boundary of the SOC cone, and the red solid lines indicate edges on the boundary of
$\conv(\Fcalprime)$ corresponding to \samb{the} RLT constraints and the upper bound $z\le {\uz}$. The dashed red lines indicate edges of the polyhedron corresponding to the RLT constraints and upper bound on $z$, \samb{highlighting the portion cut away by the SOC constraint}. Note that none of the RLT
constraints are redundant, and although the SOC constraint is globally valid, this constraint does not give a tight
upper bound on $z$ for all feasible $(x,y)$, unlike the case considered in Section \ref{SS:LB}.
In \cite{belotti}, it is noted that if $l=(0,0,{\lz})$ and $u=(+\infty,+\infty, {\uz})$ then
$\conv(\Fcalprime)$ is given by the bounds on $z$ and the SOC constraint $(z+\sqrt{{\lz}{\uz}})^2\le (\sqrt{{\lz}}+\sqrt{{\uz}})^2 xy$\,; when
${\lz}=0$ this is exactly the constraint $z^2\le {\uz}xy$.

\subsection{\samb{Non-trivial} lower bound on $xy$ with $\lx=\ly=0$} \label{SS:LB}

We next consider the case where $x\in[0,1]$, $y\in[0,1]$ and we impose \samb{only} a lower bound
on the product $z=xy \ge {\lz}$. To obtain an SOC representation for $\conv(\Fcalprime)$ we need to
characterize
the lifted tangent inequalities, as in the proof of \samb{Proposition} \ref{LEM:UB}. This is
more complex than for the case of an upper bound $z\le {\uz}$ because the lifted tangent
inequalities are now tight on line segments of the form $\alpha(\xbar,\ybar,{\lz})+(1-\alpha)(1,1,1)$,
where $\xbar\ybar={\lz}$.

\begin{lemma}\label{LEM:LB}
Let $l=(0,0,{\lz})$, $u=(1,1,1)$ where $0<{\lz}<1$. Then $\conv(\Fcalprime)$ is given by the RLT constraint
\eqref{EQ:RLTback}, the bound $z\ge {\lz}$ and the SOC constraint $\sqrt{(\xhat,\yhat) M (\xhat,\yhat)\tran}
\le x+y-2z$ where $\xhat\coloneqq1-x$, $\yhat\coloneqq1-y$, and
\[
    M=\begin{pmatrix}1 & 2l\samb{_z}-1\\2l\samb{_z}-1 & 1\end{pmatrix}\gesem 0.
\]
\end{lemma}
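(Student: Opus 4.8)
The plan is to follow the lifted-tangent-inequality (LTI) methodology of Section~\ref{SEC:LTI}, specialized to $l=(0,0,\lz)$, $u=(1,1,1)$. Since here $\uz=1$ and the curve $xy=1$ meets the box $[0,1]^2$ only at the corner $(1,1)$, the radial construction degenerates and every LTI must use $(\xbar,\ybar,\uz)=(1,1,1)$ as its upper point. Thus each LTI comes from a tangent point $(\xbar,\ybar)$ on $xy=\lz$ and is tight precisely on the segment joining $(\xbar,\ybar,\lz)$ to $(1,1,1)$, exactly as anticipated in the discussion preceding the statement. By \cite{belotti,belotti2}, $\conv(\Fcalprime)$ equals the RLT constraints together with $z\ge\lz$ and this family of LTIs, so it suffices to collapse the family into a single SOC constraint and then show the remaining RLT constraints are redundant.

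For the computational core, I would fix $(x,y)$ in the region $R:=\set{(x,y)\in[0,1]^2\suchthat xy\ge\lz}$ and write $(x,y)=\alpha(\xbar,\ybar)+(1-\alpha)(1,1)$, giving $\xhat=\alpha(1-\xbar)$ and $\yhat=\alpha(1-\ybar)$. Eliminating $(\xbar,\ybar)$ via $\xbar\ybar=\lz$ produces the quadratic $(1-\lz)\alpha^2-(\xhat+\yhat)\alpha+\xhat\yhat=0$; the geometrically correct choice is the larger root, since the smaller one yields a tangent point outside $[0,1]^2$. The segment bound $z\le\alpha\lz+(1-\alpha)$ is equivalent to $1-z\ge(1-\lz)\alpha$, and substituting the larger root, together with the identity $2(1-z)-(\xhat+\yhat)=x+y-2z$, turns the whole family into the single inequality
\[
x+y-2z\ \ge\ \sqrt{(\xhat+\yhat)^2-4(1-\lz)\xhat\yhat}.
\]
A short expansion shows the radicand equals $\xhat^2+\yhat^2+2(2\lz-1)\xhat\yhat=(\xhat,\yhat)M(\xhat,\yhat)\tran$, while $\det M=4\lz(1-\lz)>0$ gives $M\gesem0$, so the root is real and this is a genuine SOC constraint.

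It then remains to prove validity and sufficiency. For validity it is enough to check the SOC constraint on $\Fcalprime$ (where $z=xy$), since the constraint is convex; both sides are nonnegative there because $x+y-2xy=x(1-y)+y(1-x)\ge0$, and after squaring the difference factors cleanly as $(x+y-2xy)^2-(\xhat,\yhat)M(\xhat,\yhat)\tran=4(xy-\lz)(1-x)(1-y)\ge0$. For sufficiency, the SOC constraint is valid and is tight on exactly the segments on which the LTIs are tight (by construction); solving it for $z$ gives the concave function $\tfrac12\bigl(x+y-\sqrt{(\xhat,\yhat)M(\xhat,\yhat)\tran}\bigr)$, which therefore coincides with the concave upper envelope furnished by the LTIs, so it implies every LTI. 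Finally I would check that only \eqref{EQ:RLTback} survives among the RLT constraints: the identity $(\xhat,\yhat)M(\xhat,\yhat)\tran-(x-y)^2=4\lz(1-x)(1-y)$ yields $z\le\min\set{x,y}$ (recovering \eqref{EQ:RLTx}--\eqref{EQ:RLTy}), while combining the SOC upper bound on $z$ with \eqref{EQ:RLTback} forces $(1-x)(1-y)\ge0$ and $x+y\le2$, which together with $xy\ge\lz>0$ pins $(x,y)$ inside $[0,1]^2$.

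The main obstacle is twofold. First is the algebraic collapse of the LTI family: one must select the correct root of the $\alpha$-quadratic and recognize that the resulting radicand is exactly the quadratic form of $M$ — this is where the lower-bound case is genuinely harder than the upper-bound case of Proposition~\ref{LEM:UB}, whose envelope was the elementary $z^2\le\uz xy$. Second is the pruning step: showing that a \emph{single} SOC constraint together with only \eqref{EQ:RLTback} and $z\ge\lz$ already describes the hull requires the three supporting identities above, all of which happen to factor through $(1-x)(1-y)$; verifying these factorizations, and thereby confirming that the dropped RLT upper bounds and the box constraints are implied rather than assumed, is the delicate part of the argument.
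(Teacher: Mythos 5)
Your proposal is correct and follows essentially the same route as the paper's proof: you collapse the family of lifted tangent inequalities tight on segments to $(1,1,1)$ via the quadratic in $\alpha$, take the appropriate (larger) root, and recognize the radicand $(x-y)^2+4\lz(1-x)(1-y)$ as the quadratic form of $M$, yielding the same SOC constraint. The additional details you supply --- the root-selection argument, the direct validity factorization $(x+y-2xy)^2-(\xhat,\yhat)M(\xhat,\yhat)\tran=4(xy-\lz)(1-x)(1-y)$, and the explicit derivation that \eqref{EQ:RLTbottom}--\eqref{EQ:RLTy} and the box constraints are implied --- are precisely the steps the paper dismisses as ``straightforward'' or ``easily shown,'' and they check out.
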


\begin{proof}
Given a point $(x,y)$ with $x>{\lz}$, $y>{\lz}$ and $xy>{\lz}$, a lifted tangent inequality that
is tight at $(x,y,z)$ must have $x=\alpha\xstar+(1-\alpha)$, $y=\alpha\ystar+(1-\alpha)$.
Writing $\xstar$ and $\ystar$ in terms of $x$, $y$ and $\alpha$ and using
$\xstar\ystar={\lz}$ results in a quadratic equation for $\alpha$. Substituting the appropriate root of this
quadratic equation into the
constraint $z\le \alpha {\lz} + (1-\alpha)$ then obtains the equivalent inequality
\[
    z\le \frac{(x+y)-\sqrt{(x-y)^2+4l\samb{_z}(1-x)(1-y)}}{2} .
\]
It is straightforward to verify that
\[
\begin{pmatrix} 1-x \\ 1-y\end{pmatrix}\tran
\begin{pmatrix}1 & 2l\samb{_z}-1\\2l\samb{_z}-1 & 1\end{pmatrix}
\begin{pmatrix} 1-x \\ 1-y\end{pmatrix}
=(x-y)^2+4l\samb{_z}(1-x)(1-y).
\]
The constraint
$\sqrt{(\xhat,\yhat) M (\xhat,\yhat)\tran}\le x+y-2z$
 then implies all of the lifted tangent inequalities, and
$0\le {\lz}\le 1$ implies that $-1\le 1-2{\lz}\le 1$, so $M\gesem 0$.
 Therefore the convex hull of $\Fcalprime$
is given by the RLT constraints, the bound $z\ge {\lz}$ and this one SOC constraint.  However the RLT
constraints \eqref{EQ:RLTbottom}-\eqref{EQ:RLTy} are easily shown to be redundant, even if ${\lx}$
and ${\ly}$ are increased to ${\lz}$ in their definitions.
\end{proof}

\begin{figure}
    \centering
       \includegraphics[height=3.5in]{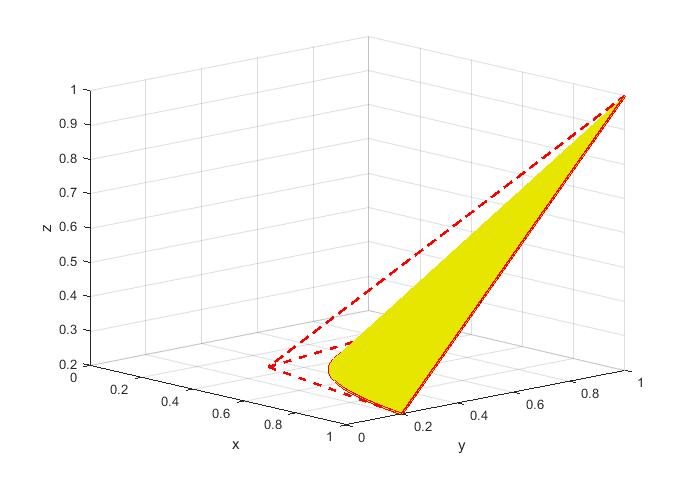}
       \caption{Convex hull with lower bound on $z$}
       \label{FG:LB}
\end{figure}
\par

In Figure \ref{FG:LB}, we illustrate $\conv(\Fcalprime)$ for the case of $l=(0,0,0.2)$, $u=(1,1,1)$. In the figure the dashed lines indicate edges corresponding to the RLT constraints \eqref{EQ:RLTx} -- \eqref{EQ:RLTy}, with ${\lx}$ and
${\ly}$ increased to ${\lz}=0.2$ in the formulas for these constraints, as in \eqref{eq:bound_assumptions}. Note that in this case the SOC constraint gives a tight upper bound on $z$ for all feasible $(x,y)$, unlike the case illustrated in Figure \ref{FG:UB}.

\subsection{\samb{Non-trivial} lower and upper bounds on $xy$ with $\lx=\ly=0$} \label{SS:LBUB}

We now consider the case where both \samb{non-trivial} lower and upper bounds are imposed on the product $z=xy$, so
$0<{\lz} <{\uz}<1$.  The situation becomes more complex than with only an upper or lower bound because now
there are 3 classes of lifted tangent inequalities. In each of the cases below, $(\xstar,\ystar,l_z) \samb{\in \Fcalprime}$.
\begin{enumerate}\baselineskip16pt
\item For \samb{the ``center'' domain} $y \ge {\uz} x$ and $x\ge {\uz} y$, each lifted tangent inequality corresponds to a line segment
    connecting $(\xstar,\ystar,{\lz})$ and $(\xbar,\ybar,{\uz}) \samb{\in \Fcalprime}$, where $\xbar\ybar={\uz}$,
$(\xbar,\ybar)=\rho(\xstar,\ystar)$ and $\rho=\sqrt{{\uz}/{\lz}}$.
\item For \samb{the ``side'' domain} $y \le {\uz} x$, each lifted tangent inequality corresponds to a line segment connecting
$(\xstar,\ystar,{\lz})$ and $(1,{\uz},{\uz})$.
\item For \samb{the ``side'' domain} $x\le {\uz} y$, each lifted tangent inequality corresponds to a line segment connecting $(\xstar,\ystar,{\lz})$
and $({\uz},1,{\uz})$.
\end{enumerate}
\noindent \samb{Figure \ref{FG:threedomains} depicts these three domains in the $xy$-space for the case where $l_z=0.2$, $u_z=0.7$.}

\begin{figure}
    \centering
       \includegraphics[height=3.5in]{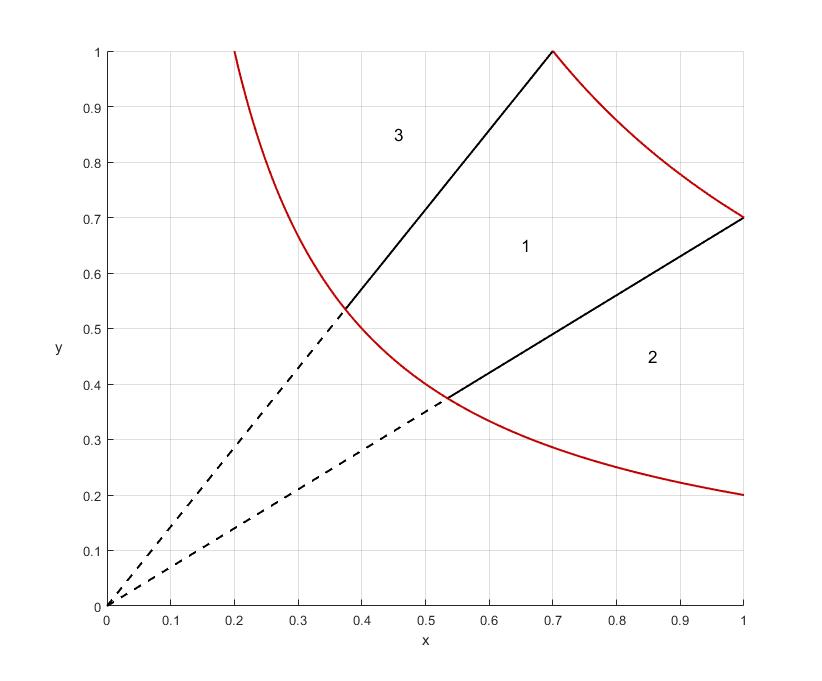}
       \caption{Domains for SOC constraints with lower and upper bounds on $z$}
       \label{FG:threedomains}
\end{figure}

In the lemma below, we show that in this case $\conv(\Fcalprime)$ can be represented using
a single RLT constraint, the bounds on $z$, and 3 different SOC constraints, each applicable on one of the domains
described above.  For convenience in stating the result, \samb{we} define matrices
\begin{equation}\label{EQ:M1M2}
M_1 \samb{:}= \begin{pmatrix} {\uz}^2 & 2{\lz}-{\uz} \\ 2{\lz}-{\uz} & 1 \end{pmatrix}, \quad
M_2 \samb{:}= \begin{pmatrix} 1 & 2{\lz}-{\uz} \\ 2{\lz}-{\uz} & {\uz}^2 \end{pmatrix}.
\end{equation}

\begin{lemma}\label{LEM:LBUB}
Let $l=(0,0,{\lz})$, $u=(1,1,{\uz})$ where $0<{\lz}<{\uz}<1$. Then $\conv(\Fcalprime)$ is given by the RLT constraint
\eqref{EQ:RLTback}, the bounds ${\lz}\le z\le {\uz}$ and three SOC constraints, each applicable in a different region:
\begin{enumerate}
\baselineskip16pt
\item The constraint $(z+\sqrt{{\lz}{\uz}})^2\le (\sqrt{{\lz}}+\sqrt{{\uz}})^2xy$, applicable if $y \ge {\uz} x$ and $x\ge {\uz} y$.
\item  The constraint $\sqrt{(\xhat,\yhat)M_1(\xhat,\yhat)\tran}\le
{\uz}x+y-2z$, where $\xhat\coloneqq 1-x$, $\yhat\coloneqq {\uz}-y$ and $M_1\gesem 0$ is given in \eqref{EQ:M1M2}, applicable if $y\le {\uz} x$.
\item  The constraint $\sqrt{(\xhat,\yhat)M_2(\xhat,\yhat)\tran}\le
x+{\uz}y-2z$, where $\xhat\coloneqq {\uz}-x$, $\yhat\coloneqq 1-y$ and $M_2\gesem 0$ is given in \eqref{EQ:M1M2}, applicable if $x\le {\uz} y$.
\end{enumerate}
\end{lemma}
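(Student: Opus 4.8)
The plan is to follow the methodology of Section~\ref{SEC:LTI} together with the proofs of Propositions~\ref{LEM:UB} and~\ref{LEM:LB}, treating each of the three classes of lifted tangent inequalities separately. By \cite{belotti,belotti2}, $\conv(\Fcalprime)$ is cut out by the RLT constraints, the bounds $\lz\le z\le\uz$, and the lifted tangent inequalities, so it suffices to show that on each of the three domains the corresponding family of lifted tangent inequalities is captured exactly by one SOC constraint, and that each SOC constraint is valid. For a point $(x,y)$ in a given domain I would write $(x,y)=\alpha(\xstar,\ystar)+(1-\alpha)(\xbar,\ybar)$ with $\xstar\ystar=\lz$, solve the relation $\xstar\ystar=\lz$ for $\alpha$ as a function of $(x,y)$, and substitute the appropriate root into the tight bound $z\le\alpha\lz+(1-\alpha)\uz$; the resulting nonlinear inequality is then recast as an SOC constraint that, by construction, is equivalent to the lifted tangent inequalities on that domain.

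For the center domain (case~1) the relevant second endpoint is the scaled point $(\xbar,\ybar)=\rho(\xstar,\ystar)$ with $\rho=\sqrt{\uz/\lz}$, so the lifting coefficient $a$ in \eqref{EQ:LTI} is independent of $(\xstar,\ystar)$ and the constraint coincides with the globally valid one noted after Proposition~\ref{LEM:UB}, namely $(z+\sqrt{\lz\uz})^2\le(\sqrt{\lz}+\sqrt{\uz})^2xy$. I would confirm this by parametrizing the segment as $(x,y)=s(\xstar,\ystar)$, $s\in[1,\rho]$, so that $xy=s^2\lz$ and, after simplification, $z+\sqrt{\lz\uz}=\lz\,s(\rho+1)$; the constraint then holds with equality along the whole segment. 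Global validity follows because on $\Fcalprime$ we have $z=xy$, and for any $w\in[\lz,\uz]$ the inequality $(w+\sqrt{\lz\uz})^2\le(\sqrt{\lz}+\sqrt{\uz})^2w$ is equivalent to $(w-\lz)(w-\uz)\le0$ and hence holds; since the constraint is a (rotated) SOC constraint, it is then valid on $\conv(\Fcalprime)$. The boundary of this domain is reached precisely when the scaled point hits a corner of the box: $\rho\xstar=1$ gives $(\xbar,\ybar)=(1,\uz)$, i.e.\ $y=\uz x$, which both matches the side endpoints and confirms the partition $\{y\ge\uz x,\ x\ge\uz y\}$.

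For the side domain $y\le\uz x$ (case~2) the tangent connects $(\xstar,\ystar,\lz)$ to the fixed corner $(1,\uz,\uz)$. Writing $\xhat=1-x$ and $\yhat=\uz-y$, the relation $\xstar\ystar=\lz$ becomes the quadratic $(\uz-\lz)\alpha^2-(\yhat+\uz\xhat)\alpha+\xhat\yhat=0$, whose discriminant is exactly $(\xhat,\yhat)M_1(\xhat,\yhat)\tran$. Substituting the larger root, which produces the tight (smallest) upper bound on $z$, into $z\le\uz-\alpha(\uz-\lz)$ and using $2\uz-\yhat-\uz\xhat=\uz x+y$ collapses the bound to $\sqrt{(\xhat,\yhat)M_1(\xhat,\yhat)\tran}\le\uz x+y-2z$. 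The domain $x\le\uz y$ (case~3) is obtained by the symmetric argument with $x$ and $y$ interchanged, yielding $M_2$. The positive semidefiniteness of $M_1$ and $M_2$ is routine: each has nonnegative diagonal and determinant $\uz^2-(2\lz-\uz)^2=4\lz(\uz-\lz)\ge0$.

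The step I expect to require the most care is not the algebra but the justification of the domain partition: I must verify that for $(x,y)$ in each region the tight lifted tangent inequality really connects to the claimed endpoint — the scaled point in the center and the corner $(1,\uz)$ or $(\uz,1)$ on the sides — and that the three SOC surfaces agree continuously along the common boundaries $y=\uz x$ and $x=\uz y$, so that the pieces assemble into a single concave upper envelope $z\le g(x,y)$ and hence a correct description of $\conv(\Fcalprime)$. Selecting the correct root of the side-domain quadratic (and checking that it lies in $[0,1]$ on the relevant domain) is the one place where a spurious branch must be ruled out. Finally, as in Proposition~\ref{LEM:LB}, I would argue that only \eqref{EQ:RLTback} need be kept among the RLT constraints: \eqref{EQ:RLTbottom} reduces to $z\ge0$ and is dominated by $z\ge\lz$, while \eqref{EQ:RLTx}--\eqref{EQ:RLTy} are implied by the bounds and the SOC constraints.
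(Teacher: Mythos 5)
Your proposal is correct and follows essentially the same route as the paper: invoke the Belotti et al.\ characterization, derive the center constraint from the scaled segments $(\xbar,\ybar)=\rho(\xstar,\ystar)$, and for each side domain solve the quadratic in $\alpha$ arising from $\xstar\ystar=\lz$ with the fixed endpoint $(1,\uz,\uz)$ (resp.\ $(\uz,1,\uz)$), recasting the resulting bound as the stated SOC constraint with $M_1$ (resp.\ $M_2$). Your explicit quadratic $(\uz-\lz)\alpha^2-(\yhat+\uz\xhat)\alpha+\xhat\yhat=0$, its discriminant identity, the root selection, the determinant computation $4\lz(\uz-\lz)\ge 0$, and the global-validity argument $(w-\lz)(w-\uz)\le 0$ all check out and merely make explicit details the paper leaves implicit (the last coinciding with the paper's derivation of \eqref{EQ:LBUBcenter} immediately after the proposition).
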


\begin{proof}
Assume first that $y \ge {\uz} x$ and $x\ge {\uz} y$.  We know that $(x,y)$ is on the line segment connecting
$(\xstar,\ystar)$ and $(\xbar,\ybar)$, from which we conclude that $\xstar=x\sqrt{{\lz}/(xy)}$ and
$\xbar=x\sqrt{{\uz}/(xy)}$.  Then $x=\alpha\xstar+(1-\alpha)\xbar$ implies that
\[
x=\alpha x\sqrt{{\lz}/(xy)} + (1-\alpha)x\sqrt{{\uz}/(xy)},
\]
from which we obtain $\sqrt{xy}=\alpha\sqrt{{\lz}} + (1-\alpha)\sqrt{{\uz}}$, or
\[
\alpha=\frac{\sqrt{{\uz}}-\sqrt{xy}}{\sqrt{{\uz}}-\sqrt{{\lz}}}.
\]
Substituting this value of $\alpha$ into the inequality $z\le \alpha {\lz}+ (1-\alpha){\uz}$ and simplifying, we obtain
the inequality $(z+\sqrt{{\lz}{\uz}})^2\le (\sqrt{{\lz}}+\sqrt{{\uz}})^2xy.$ Therefore, this SOC constraint
implies all of the lifted tangent inequalities if $y \ge {\uz} x$ and $x\ge {\uz} y$.

Next assume that $y\le {\uz} x$.  The situation is now very similar to that encountered in the proof of
\samb{Proposition} \ref{LEM:LB}, except that the lifted tangent inequality is tight on a line segment connecting a point $(\xstar,\ystar,{\lz})$ with $\xstar\ystar={\lz}$  to the point $(1,{\uz},{\uz})$, rather than $(1,1,1)$.  A similar process
to that used in the proof of \samb{Proposition} \ref{LEM:LB} again results in a quadratic equation for $\alpha$ such that
$x=\alpha\xstar+(1-\alpha)$, $y=\alpha\ystar+(1-\alpha){\uz}$, and substituting the appropriate root into the inequality $z\le \alpha {\lz} + (1-\alpha){\uz}$ results in the inequality
\[
 z\le\frac{{\uz}x+y-\sqrt{({\uz}x-y)^2+4{\lz}(1-x)({\uz}-y)}}{2}.
 \]
It is straightforward to verify that
$({\uz}x-y)^2+4{\lz}(1-x)({\uz}-y)= (\xhat,\yhat)M_1(\xhat,\yhat)\tran$,
 where $\xhat\coloneqq 1-x$, $\yhat\coloneqq {\uz}-y$, and $M_1 \gesem 0$ follows from ${\lz}\le {\uz}$.
Therefore, the constraint $\sqrt{(\xhat,\yhat)M_1(\xhat,\yhat)\tran}\le
{\uz}x+y-2z$ implies the lifted tangent inequalities when $y\le {\uz} x$.  The analysis when $x\le {\uz} y$ is
very similar, interchanging the roles of $x$ and $y$.
\end{proof}

Note that if ${\uz}=1$ then $M_1=M_2=M$, where $M\gesem 0$ was given in \samb{Proposition} \ref{LEM:LB}. In this case
we always have either $x\le y$ or $y\le x$, so the ``center'' SOC constraint is not present and the two ``side'' SOC
constraints are identical and equal to the constraint in \samb{Proposition} \ref{LEM:LB}. If ${\lz}=0$, then the SOC constraint that applies when
$y\ge {\uz}x$ and $x\ge {\uz} y$ is identical to the SOC constraint from \samb{Proposition} \ref{LEM:UB}. Moreover, if
$\lz=0$ and $y\le {\uz}x$,
then $(\xhat,\yhat)M_1(\xhat,\yhat)\tran= ({\uz}x-y)^2$, and the SOC constraint
$\sqrt{(\xhat,\yhat)M_1(\xhat,\yhat)\tran}\le {\uz}x+y-2z$ is exactly the RLT constraint $z\le y$.  Similarly for $\lz=0$ and
$x\le {\uz}y$, the SOC constraint $\sqrt{(\xhat,\yhat)M_2(\xhat,\yhat)\tran}\le x+{\uz}y-2z$ becomes the RLT
constraint $z\le x$.

\begin{figure}
    \centering
   \includegraphics[height=3.5in]{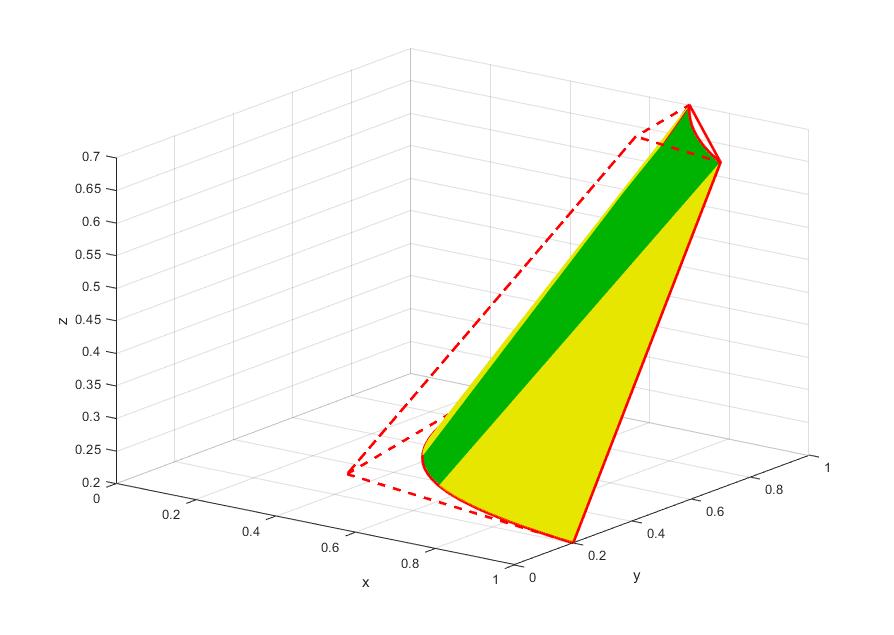}
   \caption{Convex hull with lower and upper bounds on $z$}
   \label{FG:DB}
\end{figure}

In Figure \ref{FG:DB}, we illustrate $\conv(\Fcalprime)$ for $l=(0,0,0.2)$, $u=(1,1,0.7)$.
As in Figure \ref{FG:LB} the dashed red lines indicate edges corresponding to the RLT constraints \eqref{EQ:RLTx} -- \eqref{EQ:RLTy}, with ${\lx}$ and ${\ly}$ increased to ${\lz}=0.2$ in the formulas for these constraints, as in \eqref{eq:bound_assumptions}.

\begin{figure}
    \centering
   \includegraphics[height=3.5in]{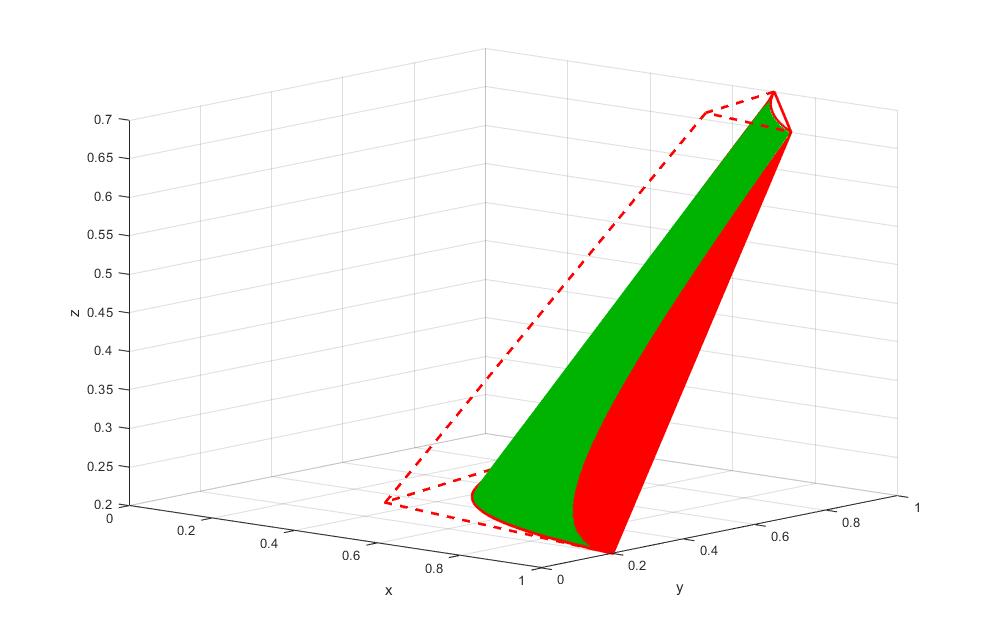}
   \caption{Center cone only with RLT constraints}
   \label{FG:DBonlyC}
\end{figure}

It is easy to show that the ``side'' SOC constraints from \samb{Proposition} \ref{LEM:LBUB}
that are applicable on the domains $y\le {\uz} x$ and $x\le {\uz} y$ are
{\em not} valid outside these domains.  However the ``center'' constraint is valid for all $(x,y,z)\in\conv(\Fcalprime)$.
To see this, note that if ${\lz}\le z=xy \le {\uz}$, then
\begin{eqnarray}
(\sqrt{{\uz}}-\sqrt{xy})(\sqrt{xy}-\sqrt{{\lz}})&\ge& 0\nonumber\\
(\sqrt{{\lz}}+\sqrt{{\uz}})\sqrt{xy} &\ge& xy + \sqrt{{\lz}{\uz}}\nonumber\\
(z+\sqrt{{\lz}{\uz}})^2 &\le& (\sqrt{{\lz}}+\sqrt{{\uz}})^2xy.\label{EQ:LBUBcenter}
\end{eqnarray}
The fact that the center constraint is globally valid means that we can approximate $\conv(\Fcalprime)$ by
using this one SOC constraint together with the RLT constraints \eqref{EQ:RLTx} -- \eqref{EQ:RLTy}, where these
RLT constraints can be tightened by using the values ${\lx}={\ly}={\lz}$ in their definitions, as in \eqref{eq:bound_assumptions}.
We illustrate this approximation in
Figure \ref{FG:DBonlyC} for the case where ${\lz}=0.2$, ${\uz}=0.7$, as in Figure \ref{FG:DB}.
It appears that the use
of this one SOC constraint together with the RLT constraints gives a very close approximation of $\conv(\Fcalprime)$. To
show this more precisely, in Figure \ref{FG:DBslicer} we consider the same case of ${\lz}=0.2$, ${\uz}=0.7$ but show three
slices, or cross-sections, corresponding to the values $z=0.3$, $z=0.45$ and $z=0.6$.  At each value for $z$ the gray shaded
area is the difference between $\conv(\Fcalprime)$ as given by the three SOC constraints from \samb{Proposition} $\ref{LEM:LBUB}$
and the region determined by the center SOC constraint \eqref{EQ:LBUBcenter} combined with the RLT constraints
\eqref{EQ:RLTx} -- \eqref{EQ:RLTy}.

\begin{figure}
    \centering
   \includegraphics[height=4in]{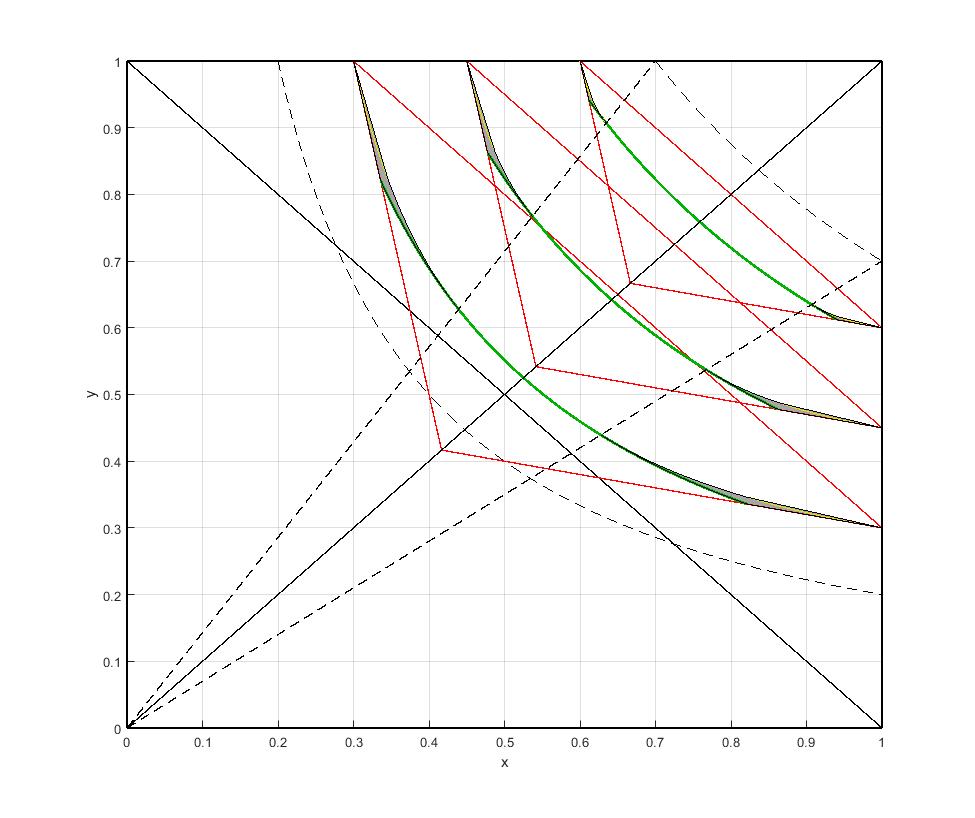}
   \caption{Cross-sections of convex hull vs. center cone only with RLT constraints}
   \label{FG:DBslicer}
\end{figure}

In addition to the approximation based on one SOC constraint, it is possible to give an exact disjunctive represention
of $\conv(\Fcalprime)$ over
the entire region corresponding to the bounds $l=(0,0,{\lz})$, $u=(1,1,{\uz})$ by using additional variables
$(\lam_i,x_i,y_i,z_i)$, $i=1,2,3$, where $\lam\ge 0$, ${\uz} x_1\le y_1 \le x_1/{\uz}$, $y_2\le {\uz} x_2$, $x_3\le {\uz} y_3$, and
\[
x=\sum_{i=1}^3 x_i, \quad y=\sum_{i=1}^3 y_i, \quad z=\sum_{i=1}^3 z_i,\quad\sum_{i=1}^3 \lam_i=1.
\]
Each $(x_i,y_i,z_i)$ is then constrained to be in one of the regions given in \samb{Proposition} \ref{LEM:LBUB}, homogenized using
the variable $\lam_i$. We omit the straightforward details.

\leaveout{
In the resulting constraints the bounds on $z_i$ are replaced by $\lam_i {\lz}\le z_i\le \lam_i {\uz}$, and the RLT constraint \eqref{EQ:RLTback}
 is replaced by the constraint $z_i\ge x_i+y_i-\lam_i$, $i=1,2,3$.
In addition, in the SOC constraint for $(x_1,y_1,z_1)$,
$\sqrt{{\lz}{\uz}}$ is replaced with $\lam_1\sqrt{{\lz}{\uz}}$;  in the SOC constraint for
$(x_2,y_2,z_2)$ the definitions of $(\xhat_2,\yhat_2)$ are changed to
$\xhat_2=(\lam_2-x_2)$ and $\yhat_2=(\lam_2{\uz}-y_2)$, and  in the SOC constraint for
$(x_3,y_3,z_3)$ the definitions of $(\xhat_3,\yhat_3)$ are changed to $\xhat_3=(\lam_3{\uz}-x_3)$, $\yhat_3=(\lam_3-y_3)$.}

\subsection{Volume computation}\label{SS:volume}

As an application of the above results, in this section we will compare the volumes of $\conv(\Fcalprime)$ that are obtained by
applying the SOC constraints described in \samb{Propositions} \ref{LEM:UB} and \ref{LEM:LB} to the volumes of the
regions corresponding to the RLT constraints and the simple bound constraints $z\le {\uz}$ or $z\ge {\lz}$ \samb{(but not both)}.
Computing these volumes will also allow us to compute the total volume reduction that is obtained by creating
two subproblems, one corresponding to impoing an upper bound $z\le b$ and the other a lower bound $z\ge b$.

In the case of an upper bound $z\le {\uz}$, it is straightforward to compute that the volume of the RLT region
with the additional constraint $z\le {\uz}$ is ${\uz}({\uz}^2-3{\uz}+3)/6$, and \samb{using a simple integration calculation}, the volume removed by adding the SOC
constraint in \samb{Proposition} \ref{LEM:UB} is ${\uz}^2({\uz}-1-\ln({\uz}))/3$.  The volume of $\conv(\Fcalprime)$ with
bounds $l=(0,0,0)$, $u=(1,1,{\uz})$ is therefore
\begin{equation}\label{EQ:UBvolume}
\frac{u}{6}\left(3+2{\uz}\ln({\uz}) - {\uz} -{\uz}^2\right).
\end{equation}

In the case of a lower bound $z\ge {\lz}$, the volume of the RLT region with the added constraint $z\ge {\lz}$ is
$(1+{\lz})^3/6$, where here we impose the RLT constraints \eqref{EQ:RLTx} -- \eqref{EQ:RLTy} using ${\lx}={\ly}=0$.
The volume removed by adding the SOC constraint in \samb{Proposition} \eqref{LEM:LB} can be computed to be
${\lz}(1-{\lz})({\lz}-1-\ln({\lz}))/3$.  The volume  of $\conv(\Fcalprime)$ with
bounds $l=(0,0,{\lz})$, $u=(1,1,1)$ is therefore
\begin{equation}\label{EQ:LBvolume}
\frac{1-{\lz}}{6}\left(1+2{\lz}\ln({\lz})-{\lz}^2\right).
\end{equation}

\begin{figure}
    \centering
   \includegraphics[width=\textwidth]{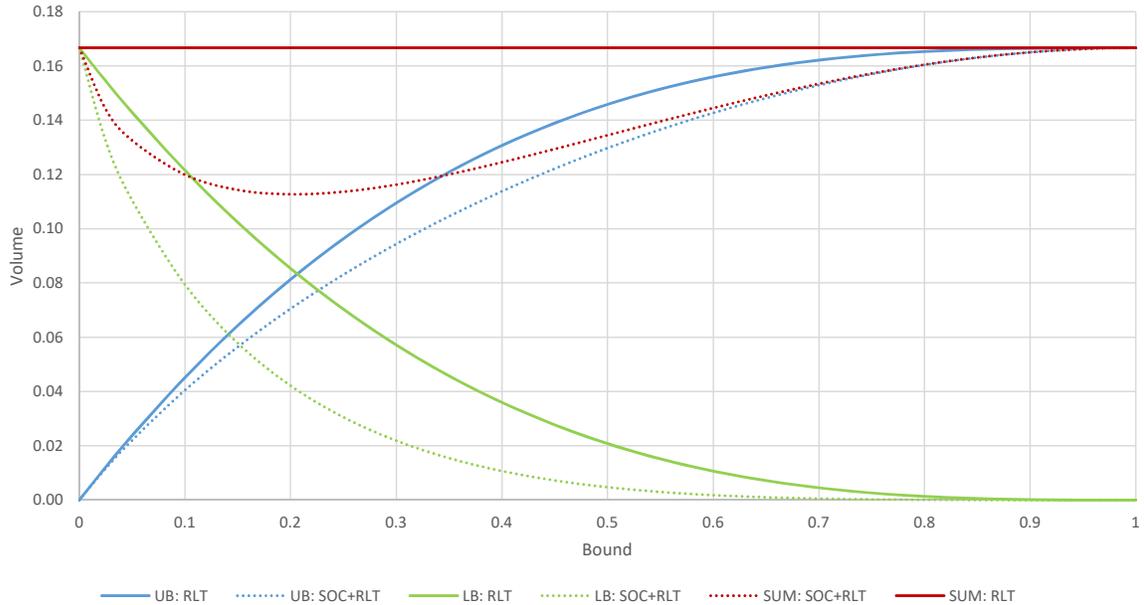}
   \caption{Volume comparisons for convex hulls versus RLT regions with added bounds on $z$.}
   \label{FG:VolComp}
\end{figure}

We illustrate these volume computations in Figure \ref{FG:VolComp}. \samb{Let
$b$ represent the bound depicted on the horizontal access.} In
the figure the UB: SOC+RLT series shows the volume of
$\conv(\Fcalprime)$ with an upper bound ${\uz} \samb{= b}$ from
\eqref{EQ:UBvolume}, and for comparison UB: RLT shows the
volume of the RLT region cut at $z={\uz} \samb{=b}$. The LB:
SOC+RLT series similarly shows the volume of $\conv(\Fcalprime)$
with a lower bound ${\lz \samb{=b}}$ from \eqref{EQ:UBvolume}, and for
comparison LB: RLT shows the volume of the RLT region cut at
$z={\lz} \samb{= b}$. The SUM: SOC+RLT series shows the sum
of the two volumes from \eqref{EQ:UBvolume} and \eqref{EQ:LBvolume}
if ${\lz}={\uz}=b$. The sum of the volumes of the two RLT regions,
one cut from below at ${\lz}=b$ and the other cut from above at
${\uz}=b$, is constant and equal to $1/6$. From the chart it is evident
that the sum of the volumes of the two convex hulls is minimized at
approximately $b=0.2$; the exact minimizer satisfies the nonlinear
equation $\ln(b)=2(b-1)$. In Figure \ref{FG:VolRat}, we graph the ratio
of the volume \eqref{EQ:UBvolume} to that of the RLT region cut at
${\uz}=b$, the ratio of the volume \eqref{EQ:LBvolume} to that of the
RLT region cut at ${\lz}=b$, and the ratio of the sum of the two volumes
to that of the total RLT region. The volume of the sum is reduced by
approximately 32.4\% at the minimizing value. This has an interesting
interpretation as the possible effect of applying spatial branching
to the continuous variable $z$, where one subproblem has an upper
bound ${\uz}=b$ and the other has a lower bound ${\lz}=b$. In Figure
\ref{FG:DI}, we illustrate the effect of such a branching by showing the
convex hulls for ${\uz}=0.3$ and ${\lz}=0.3$; in this case a total of
approximately 30\% of the volume of the original RLT region is removed
by considering the two subproblems. See \cite{Lee.Skipper.Speakman} for
a recent survey of volume-based comparisons of polyhedral relxations for
nonconvex optimization, and \cite{Speakman.Lee} for an application to
branching-point selection in the presence of trilinear terms.

\begin{figure}
    \centering
   \includegraphics[width=\textwidth]{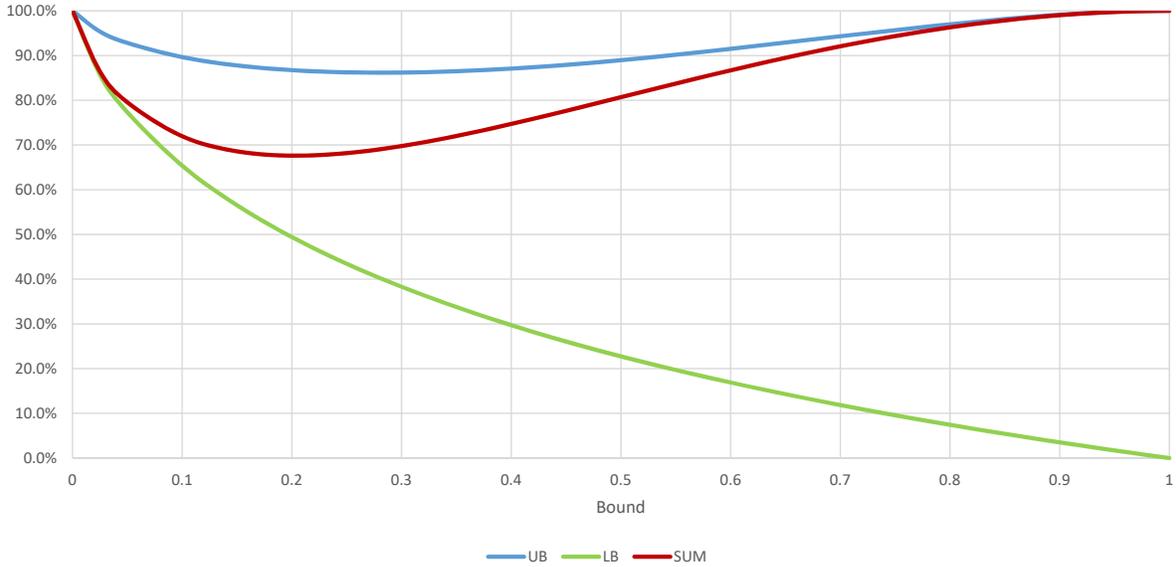}
   \caption{Volume ratios between convex hulls and RLT regions with added bounds on $z$}
\label{FG:VolRat}
\end{figure}

\begin{figure}
    \centering
   \includegraphics[height=3.5in]{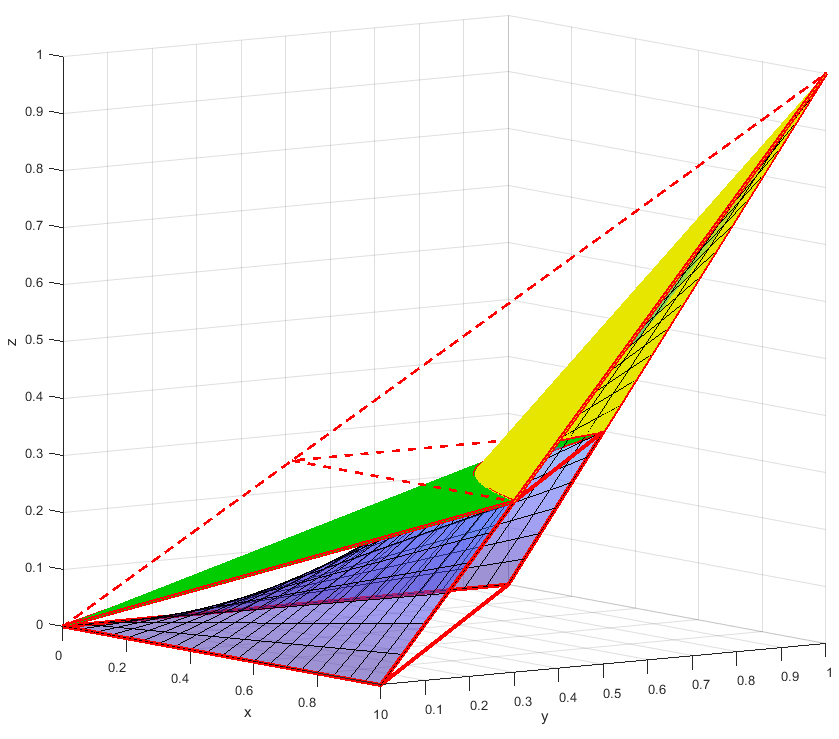}
   \caption{Effect of spatial branching on $z$}
   \label{FG:DI}
\end{figure}

\section{\samb{Convex hull representation with general $(\lx,\ly)$}}\label{SEC:Gen}

In this section, we consider the case where the original variables
$(x,y)$ have more general bounds of the form ${\lx}\leq x\leq {\ux}$,
${\ly}\leq y\leq {\uy}$. \samb{In particular, $\lx$ and $\ly$ can
be positive.} We continue to assume without loss of generality that
${\ux}={\uy}=1$ since this can always be achieved by a simple rescaling
of $x$ and/or $y$. Furthermore, as discussed in the introduction, we now
assume without loss of generality that \eqref{eq:bound_assumptions} holds.


\subsection{\samb{Non-trivial} lower bound on $xy$ with general $(\lx,\ly)$} \label{SS:LBgen}

With general lower bounds on $(x,y)$ and a \samb{non-trivial} lower bound on the product $z$,
$\conv(\Fcalprime)$ can be described almost identically to the representation given in \samb{Proposition} \ref{LEM:LB}
for the case of ${\lx}={\ly}=0$.

\begin{lemma}\label{LEM:LBgen}
Let $l=({\lx},{\ly},{\lz})$, $u=(1,1,1)$ where $0 \le \lx \ly < \lz < 1$. Then $\conv(\Fcalprime)$ is given by the RLT constraints \eqref{EQ:RLT}, the bound $z\ge {\lz}$ and the SOC constraint $\sqrt{(\xhat,\yhat) M (\xhat,\yhat)\tran}
\le x+y-2z$ where $\xhat\coloneqq1-x$, $\yhat\coloneqq1-y$, and
\[
    M=\begin{pmatrix}1 & 2l\samb{_z}-1\\2l\samb{_z}-1 & 1\end{pmatrix}\gesem 0.
\]
\end{lemma}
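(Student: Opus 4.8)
The plan is to follow the proof of Proposition~\ref{LEM:LB} almost verbatim, the essential observation being that $u=(1,1,1)$ forces $\uz=1$, and the only point of $\Fcalprime$ with $z=\uz$ is $(1,1,1)$, since $xy=1$ together with $x\le 1$, $y\le 1$ gives $x=y=1$. Hence every lifted tangent inequality again joins a point $(\xstar,\ystar,\lz)$ with $\xstar\ystar=\lz$ to the single point $(1,1,1)$, exactly as when $\lx=\ly=0$. Invoking the standing assumption \eqref{eq:bound_assumptions} of this section, I may take $\lz\le\lx$ and $\lz\le\ly$, so the only change from Proposition~\ref{LEM:LB} is that the admissible tangent points $(\xstar,\ystar)$ now lie on the sub-arc of $xy=\lz$ inside the box, i.e.\ $\xstar\in[\lx,\lz/\ly]\subseteq[\lz,1]$.

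First I would establish validity. Writing $z=xy$ gives $x+y-2z=x(1-y)+y(1-x)\ge 0$, and, using the identity $(\xhat,\yhat)M(\xhat,\yhat)\tran=(x-y)^2+4\lz(1-x)(1-y)$ verified as in Proposition~\ref{LEM:LB},
\[
(x+y-2z)^2-(\xhat,\yhat)M(\xhat,\yhat)\tran=4(1-x)(1-y)(xy-\lz)\ge 0,
\]
where the inequality uses $x\le1$, $y\le1$ and $xy\ge\lz$. Thus the SOC constraint holds on $\Fcalprime$; since $\det M=4\lz(1-\lz)\ge0$ gives $M\gesem 0$ and the SOC region is therefore convex, it holds throughout $\conv(\Fcalprime)$. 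The RLT constraints~\eqref{EQ:RLT} and the bound $z\ge\lz$ are valid for $\Fcalprime$ and linear, hence valid on $\conv(\Fcalprime)$. Writing $P$ for the region cut out by~\eqref{EQ:RLT}, $z\ge\lz$ and the SOC constraint, this shows $\conv(\Fcalprime)\subseteq P$.

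For the reverse inclusion I would invoke \cite{belotti,belotti2}, by which $\conv(\Fcalprime)$ is exactly the set defined by~\eqref{EQ:RLT}, $z\ge\lz$ and all lifted tangent inequalities. The algebra of Proposition~\ref{LEM:LB}---solving $x=\alpha\xstar+(1-\alpha)$, $y=\alpha\ystar+(1-\alpha)$ with $\xstar\ystar=\lz$ for $\alpha$ and substituting into $z\le\alpha\lz+(1-\alpha)$---depends only on the upper point being $(1,1,1)$, so it carries over unchanged to every admissible tangent point here. Consequently each lifted tangent inequality in the present setting is one of those shown in Proposition~\ref{LEM:LB} to be implied by the (identical) SOC constraint; the present family is simply the subfamily indexed by the shorter arc $\xstar\in[\lx,\lz/\ly]$. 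Hence every point of $P$ satisfies all lifted tangent inequalities, which gives $P\subseteq\conv(\Fcalprime)$ and therefore $P=\conv(\Fcalprime)$.

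The one genuine difference from Proposition~\ref{LEM:LB}, and the main point to get right, is that the RLT constraints~\eqref{EQ:RLTbottom}--\eqref{EQ:RLTy} can no longer be discarded. When $\lx=\ly=0$ they were redundant because the SOC surface was the tight upper bound on $z$ over the entire domain; once $\lx,\ly>0$ the admissible arc is strictly shorter, the SOC surface is tight only on a central region, and near the two ends $(\lx,\lz/\lx)$ and $(\lz/\ly,\ly)$ of the arc the binding upper bound on $z$ is instead furnished by~\eqref{EQ:RLTx}--\eqref{EQ:RLTy}. The obstacle is thus to confirm that restricting the tangent point to the box-portion of $xy=\lz$ (with the upper point pinned to $(1,1,1)$) produces \emph{no} lifted tangent inequalities beyond this subfamily, so that the \cite{belotti,belotti2} description is closed by the full RLT system together with the single SOC constraint and no additional cuts are needed in the corner regions.
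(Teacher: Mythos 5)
Your proposal is correct and takes essentially the same approach as the paper: the paper's proof likewise reduces to Proposition \ref{LEM:LB} by noting that the construction of the SOC constraint is identical (since $\uz=1$ pins the upper tight point of every lifted tangent inequality at $(1,1,1)$), and then observes that, in contrast to Proposition \ref{LEM:LB}, the constraints \eqref{EQ:RLTbottom}--\eqref{EQ:RLTy} are no longer redundant when $\lx>\lz$ or $\ly>\lz$. Your explicit validity identity $(x+y-2z)^2-(\xhat,\yhat)M(\xhat,\yhat)\tran=4(1-x)(1-y)(xy-\lz)$ supplies a detail the paper leaves implicit, and the ``obstacle'' you flag in your final paragraph is already settled by your first-paragraph observation that $xy=1$ with $x\le 1$, $y\le 1$ forces $(x,y)=(1,1)$, so the family of lifted tangent inequalities is indeed the claimed subfamily.
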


\begin{proof}
The construction of the SOC constraint that implies the lifted tangent inequalities is identical to the case of
${\lx}={\ly}=0$ considered in the proof of \samb{Proposition} \ref{LEM:LB}, and this SOC constraint together with the RLT
constraints \eqref{EQ:RLT} and the bound $z\le {\uz}$ gives \samb{$\conv(\Fcalprime)$}.  However, \samb{in contrast to Proposition
\ref{LEM:LB}}, if ${\lx}>{\lz}$ then
the constraint \eqref{EQ:RLTx} is no longer redundant, if ${\ly}>{\lz}$ the constraint \eqref{EQ:RLTy} is no longer redundant, and in \samb{both} case\samb{s} the constraint \eqref{EQ:RLTbottom} is no longer redundant.
\end{proof}

In Figure \ref{FG:GenLBwLBc2}, we illustrate $\conv(\Fcalprime)$ \samb{for} $l=(0.5,0.3,0.3)$, $u=(1,1,1)$. Since
${\lx}>{\lz}$, the constraints \eqref{EQ:RLTx} and \eqref{EQ:RLTbottom} are now active.

\begin{figure}[h!]
    \centering
   \includegraphics[height=3.5in]{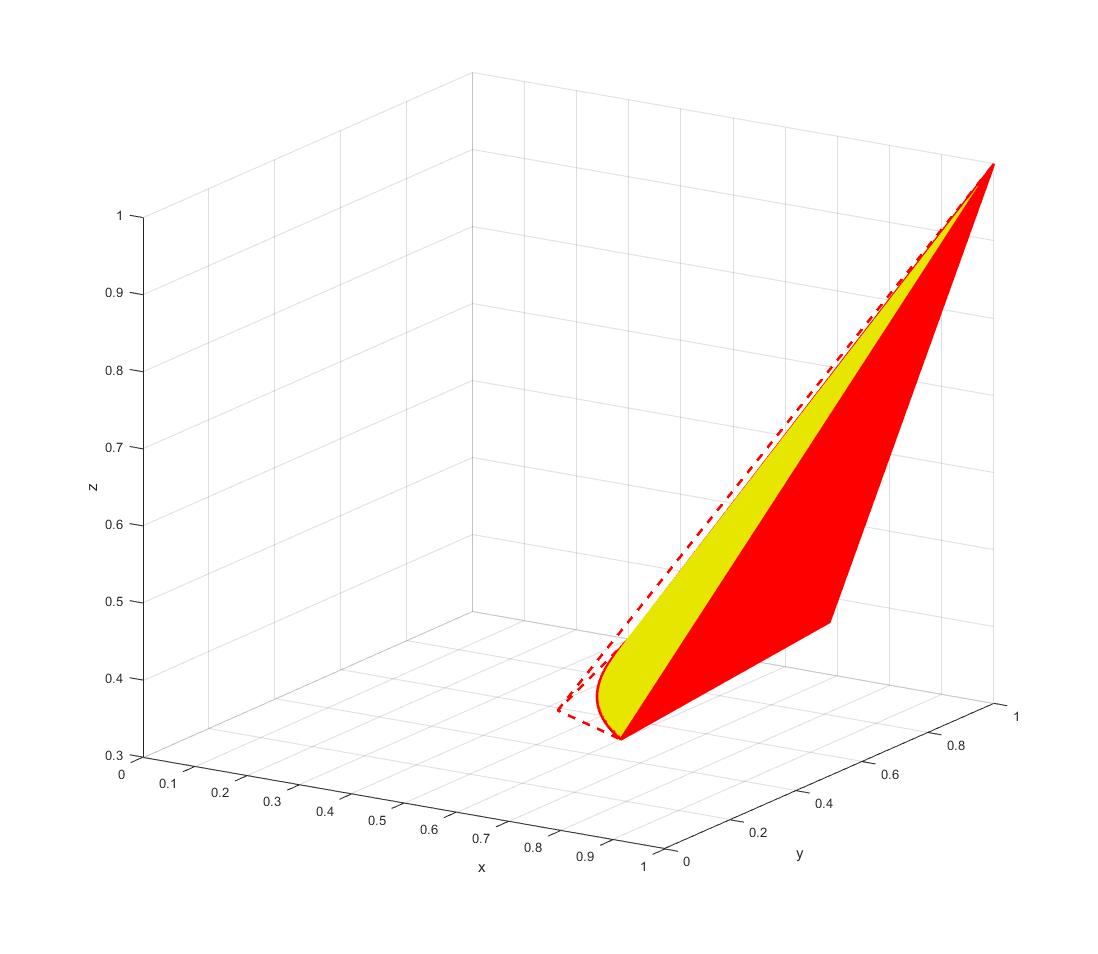}
   \caption{Convex hull with general lower bounds on $(x,y,z)$.}
   \label{FG:GenLBwLBc2}
\end{figure}

\subsection{\samb{Non-trivial} upper bound on $xy$ with general $(\lx,\ly)$} \label{SS:UBgen}

With general lower bounds $\lx$ and $\ly$ \samb{and a non-trivial upper bound on $z$}, the geometry of $\conv(\Fcalprime)$ is similar to the case of
$\lx=\ly=0$ considered in Section \ref{SS:UB}, but the derivation of \samb{the} conic constraint in SOC form is
 more complex.  Lifted tangent inequalities now correspond to line segments joining a point
 $(\xbar,\ybar,u_z) \samb{\in \Fcalprime}$ with the point $(\lx,\ly,\lx\ly)$.  For a point $(x,y,z)$ on such a line segment
we have $x=\alpha\lx+(1-\alpha)\xbar$, $y=\alpha\ly+(1-\alpha)\ybar$.  Writing $(\xbar,\ybar)$ in terms of $(x,y)$
then results in a quadratic equation for $\alpha$, and for the appropriate root of this equation the constraint
$x\le \alpha(\lx\ly)+(1-\alpha)\uz$ results in the constraint
\begin{equation}\label{EQ:UBgen}
(z-\ly x)(z-\lx y) \le \uz(x-\lx)(y-\ly).
\end{equation}
This constraint is certainly valid for all $(x,y,z)\in \conv(\Fcalprime)$.  In particular, if $z=xy\le\uz$ then
$(z-\ly x)= x(y-\ly)$ and $(z-\lx y)= y(x-\lx)$, so $(z-\ly x)(z-\lx y) = xy (x-\lx)(y-\ly)\le \uz(x-\lx)(y-\ly)$.
Note that if $\lx=\ly=0$, then \eqref{EQ:UBgen} is exactly the SOC constraint $z^2\le \uz xy$ from \samb{Proposition} \ref{LEM:UB}.
If either $\lx=0$ or $\ly=0$ it is also easy to put the constraint \eqref{EQ:UBgen} into the form of an SOC constraint,
but when $\lx>0$, $\ly>0$ this is nontrivial.

\begin{lemma}\label{LEM:UBgen}
Let $l=({\lx},{\ly},0)$, $u=(1,1,\uz)$ where $0<{\uz}<1$. Then $\conv(\Fcalprime)$ is given by the RLT constraints \eqref{EQ:RLT}, the bound $z\le {\uz}$ and the SOC constraint
\begin{equation}\label{EQ:UBgenSOC}
\uz(z-\lx\ly)^2\le \big(\uz(x-\lx)+\lx(z-\ly x)\big)\big(\uz(y-\ly)+\ly(z-\lx y)\big).
\end{equation}
\end{lemma}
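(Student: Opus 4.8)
The plan is to reduce the lemma to facts already established for the nonlinear inequality \eqref{EQ:UBgen}. By \cite{belotti,belotti2}, $\conv(\Fcalprime)$ is cut out by the RLT constraints \eqref{EQ:RLT}, the bound $z\le\uz$, and the lifted tangent inequalities; and the discussion preceding the lemma has shown that the single family of line segments emanating from $(\lx,\ly,\lx\ly)$ collapses these lifted tangent inequalities to the single inequality \eqref{EQ:UBgen}, which is moreover globally valid for $\conv(\Fcalprime)$. Hence it suffices to prove two things: (i) the displayed constraint \eqref{EQ:UBgenSOC} cuts out exactly the same set as \eqref{EQ:UBgen}; and (ii) \eqref{EQ:UBgenSOC} is genuinely a convex SOC constraint on $\conv(\Fcalprime)$, i.e.\ its right-hand side is a product of two nonnegative affine forms there.

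For (i) I would pass to the shifted coordinates $p:=x-\lx$, $q:=y-\ly$, $w:=z-\lx\ly$, under which $z-\ly x=w-\ly p$ and $z-\lx y=w-\lx q$, so that the two factors on the right of \eqref{EQ:UBgenSOC} become $A:=\uz(x-\lx)+\lx(z-\ly x)=c\,p+\lx w$ and $B:=\uz(y-\ly)+\ly(z-\lx y)=c\,q+\ly w$, where $c:=\uz-\lx\ly$. A direct expansion then yields the polynomial identity
\[
A\,B-\uz\,(z-\lx\ly)^2 \;=\; (\uz-\lx\ly)\Big(\uz(x-\lx)(y-\ly)-(z-\ly x)(z-\lx y)\Big),
\]
since both sides reduce to $c\big(c\,pq+(\ly p+\lx q)w-w^2\big)$. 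Because the assumption \eqref{eq:bound_assumptions} gives $\lx\le\uz$ and $\ly\le\uz<1$, we have $\lx\ly\le\uz^2<\uz$, hence $c=\uz-\lx\ly>0$. The identity therefore shows that the slack of \eqref{EQ:UBgenSOC} and the slack of \eqref{EQ:UBgen} always have the same sign, so the two inequalities define the same set.

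For (ii) I would note that the right-hand side of \eqref{EQ:UBgenSOC} is the product $A\,B$ and check that $A\ge0$, $B\ge0$ on $\conv(\Fcalprime)$. The bound constraints give $x\ge\lx$ and $y\ge\ly$, while the RLT constraint \eqref{EQ:RLTbottom}, $z\ge\ly x+\lx y-\lx\ly$, gives $z-\ly x\ge\lx(y-\ly)\ge0$ and $z-\lx y\ge\ly(x-\lx)\ge0$; since $\uz,\lx,\ly\ge0$, each of $A$ and $B$ is a sum of nonnegative terms. Consequently \eqref{EQ:UBgenSOC} is the rotated cone $\uz(z-\lx\ly)^2\le A\,B$ with $A,B\ge0$, which is SOC-representable in the standard way, e.g.\ $\|(2\sqrt{\uz}\,(z-\lx\ly),\,A-B)\|\le A+B$.

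Combining these, the reverse inclusion follows because any point satisfying the RLT constraints, $z\le\uz$, and \eqref{EQ:UBgenSOC} satisfies \eqref{EQ:UBgen} by (i), hence all lifted tangent inequalities, and so lies in $\conv(\Fcalprime)$; the forward inclusion follows because every point of $\conv(\Fcalprime)$ satisfies the RLT constraints and $z\le\uz$, satisfies \eqref{EQ:UBgen} (shown earlier) and thus \eqref{EQ:UBgenSOC}, and has $A,B\ge0$ by the argument in (ii). I expect the main obstacle to be the bookkeeping of the polynomial identity in (i) — the factor $c=\uz-\lx\ly$ must be isolated cleanly so that the equivalence is exact — together with the conceptual point in (ii) that the nonnegativity of the two affine factors must be verified, since otherwise \eqref{EQ:UBgenSOC} would be only a valid quadratic inequality rather than a convex SOC constraint.
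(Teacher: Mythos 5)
Your proposal is correct and takes essentially the same approach as the paper's proof: reduce to the already-established validity of \eqref{EQ:UBgen} via the lifted tangent inequalities, note that \eqref{EQ:UBgenSOC} is \eqref{EQ:UBgen} multiplied by the positive constant $\uz-\lx\ly$, and use $x\ge\lx$, $y\ge\ly$ together with \eqref{EQ:RLTbottom} to show both right-hand factors are nonnegative, so that \eqref{EQ:UBgenSOC} is a genuine (rotated) SOC constraint. The only difference is that you carry out explicitly, in shifted coordinates, the ``direct computation'' the paper leaves to the reader, and you justify $\uz-\lx\ly>0$ from \eqref{eq:bound_assumptions} where the paper simply asserts it.
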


\begin{proof}
The convex hull of $\Fcalprime$ is given by the RLT constraints, the bound
$z\le \uz$, and the lifted tangent inequalities, and the latter are implied by the constraint \eqref{EQ:UBgen}.
By a direct computation the constraint \eqref{EQ:UBgenSOC} is equivalent to multiplying both sides of \eqref{EQ:UBgen}
by the constant $\uz-\lx\ly>0$.  Moreover, $x\ge \lx$, $y\ge \ly$ and the RLT constraint
\eqref{EQ:RLTbottom} together imply that $z \ge \ly x$ and $z\ge \lx y$. Both terms that form the product on the right-hand side of \eqref{EQ:UBgenSOC} can therefore be assumed to be nonnegative, so \eqref{EQ:UBgenSOC} is an
SOC constraint that implies the lifted tangent inequalities.
\end{proof}

The proof of \samb{Proposition} \ref{LEM:UBgen} requires only that \eqref{EQ:UBgen} and \eqref{EQ:UBgenSOC} are equivalent,
but it is worth noting how \eqref{EQ:UBgenSOC} was obtained.  This was accomplished by writing \eqref{EQ:UBgen}
in the form $v\tran Q v\le 0$, where $v=(1,x,y,z)\tran$, and then performing symbolic, symmetric transformations on $Q$
so as to obtain
\[
SQS\tran =
\Qhat=\begin{pmatrix} 2\uz & 0 & 0 &0\\  0 & 0 & -1 & 0 \\ 0 & -1 & 0 & 0 \\  0 & 0 & 0 &0\end{pmatrix}.
\]
Note that $v\tran Qv=v\tran S^{-1} \Qhat S^{-T}v$, and $\Qhat$ has exactly one negative eigenvalue.  The spectral decomposition of $\Qhat$ and the symbolic matrix $S^{-T}$ were together used to obtain the equivalent SOC constraint
\eqref{EQ:UBgenSOC}. In Figure \ref{FG:GenLBwUB}, we illustrate $\conv(\Fcalprime)$ for the case with
$u_z=0.7$ and lower bounds $l_x=0.4$, $l_y=0.5$.

\begin{figure}[h!]
    \centering
   \includegraphics[height=3.5in]{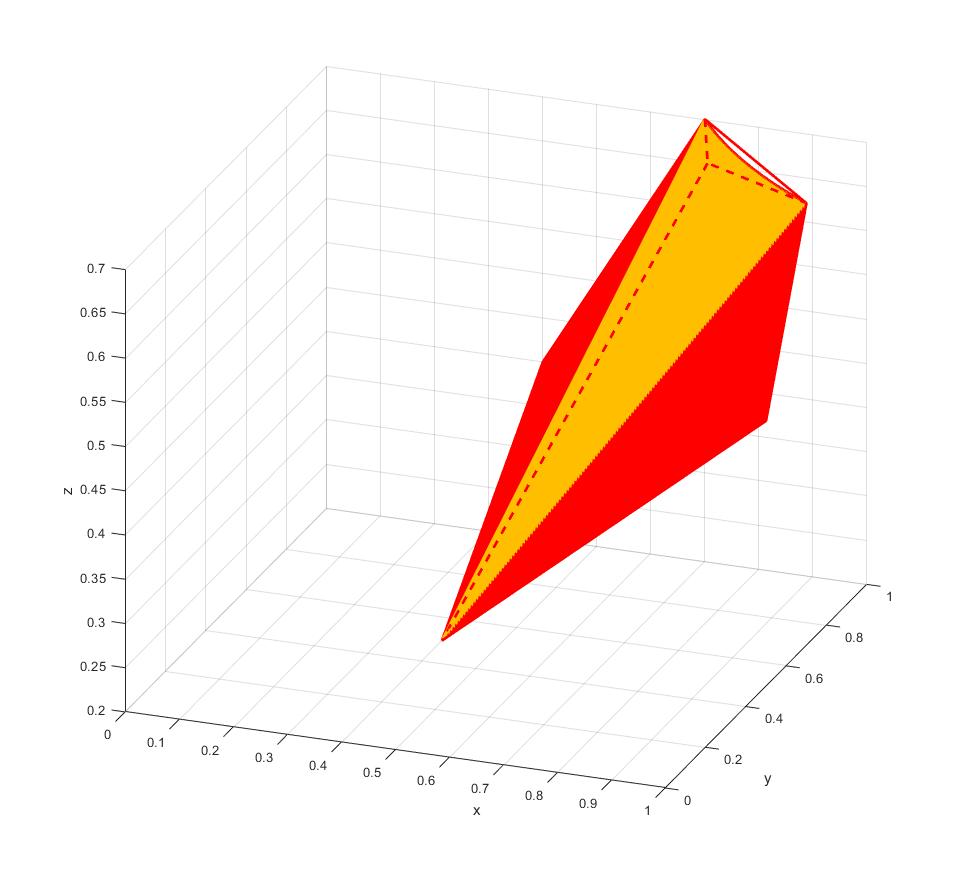}
   \caption{Convex hull with general lower bounds on $(x,y)$ and \samb{non-trivial} upper bound on $z$}
   \label{FG:GenLBwUB}
\end{figure}

\subsection{\samb{Non-trivial} lower and upper bounds on $xy$ with general $(\lx,\ly)$}\label{SS:LBUBgen}

We now consider the most general case for $\Fcalprime$, where $l=(\lx,\ly,\lz)>0$ and $\uz<1$.  We continue
to assume that $\ux=\uy=1$, and $\lz\le {\lx}\le {\uz}$, ${\lz}\le {\ly}\le {\uz}$ as described at the beginning of
the section.  Finally we assume that $\lx\ly<\lz$, since \samb{otherwise} $\lx\ly\ge\lz$ implies that the lower bound $xy\ge \lz$ is
redundant, \samb{which is} the case of the previous section.

In order to describe the possible representations for $\conv(\Fcalprime)$, it is very convenient to dissect the
domain for possible values of $(\lx,\ly)$ into regions where representations of a particular type occur.  These
regions naturally involve the values $\sqrt{\lz\uz}$ and $\sqrt{\lz/\uz}$. In particular, note that the point
$(x,y)=(\sqrt{\lz/\uz},\sqrt{\lz\uz})$ is the intersection of the line $y=\uz x$ and the curve $xy=\lz$, while
$(x,y)=(\sqrt{\lz\uz},\sqrt{\lz/\uz})$ is the intersection of the line $x=\uz y$ and the curve $xy=\lz$. Under
our assumptions for the values of $l$ and $u$, the possible regions for $(\lx,\ly)$ are as follows and are illustrated
in Figure \ref{FG:domains} for the case of $\lz=0.1$, $\uz=0.7$.
\begin{itemize}\baselineskip16pt
\item[A.] $\lx\ge \sqrt{\lz\uz}$, $\ly\ge \sqrt{\lz\uz}$, $\lx\ly<\lz$.
\item[B.] $\lz\le\lx\le\sqrt{\lz\uz}$, $\lz\le\ly\le \sqrt{\lz\uz}$.
\item[C.] $\lz\le\lx\le\sqrt{\lz\uz}$, $\sqrt{\lz\uz}\le\ly\le \sqrt{\lz/\uz}$.
\item[D.] $\lx\ge\lz$, $\sqrt{\lz/\uz}\le \ly\le\uz$, $\lx\ly\le\lz$.
\item[E.] $\sqrt{\lz\uz}\le\lx\le \sqrt{\lz/\uz}$, $\lz\le\ly\le\sqrt{\lz\uz}$.
\item[F.] $\sqrt{\lz/\uz}\le \lx\le\uz$, $\ly\ge\lz$, $\lx\ly\le\lz$.
\end{itemize}

\begin{figure}[h!]
    \centering
   \includegraphics[height=3.5in]{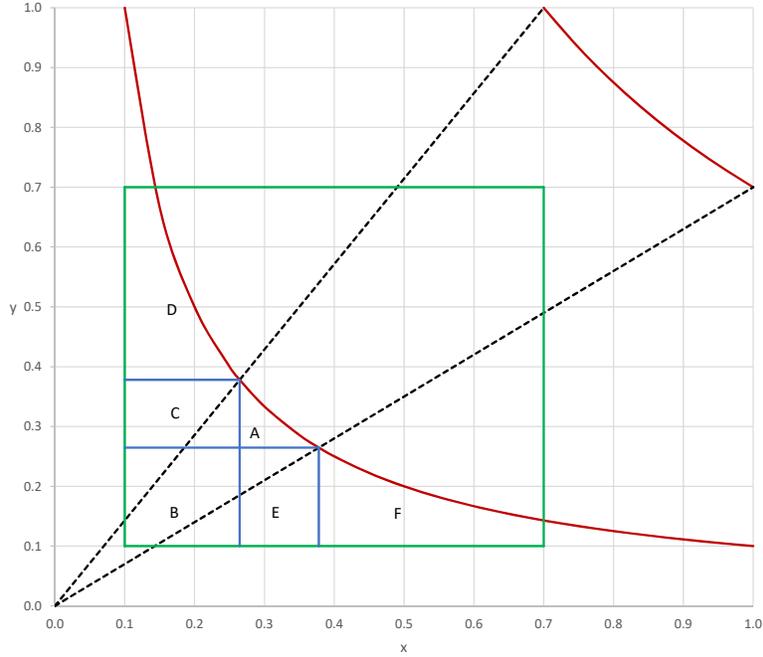}
   \caption{Domains for $({\lx},{\ly})$ with $\lz=0.1$, $\uz=0.7$}
   \label{FG:domains}
\end{figure}

It is clear that regions E and F correspond to regions C and D, respectively, with the roles of $x$ and $y$ interchanged.
Since we can assume without loss of generality that $\lx\le \ly$, in the results below we will only
consider regions A--D.  We omit proofs of these results since in all cases they are based on SOC representations
for lifted tangent inequalities described in earlier sections.  In each of the four cases, the representation of
 $\conv(\Fcalprime)$ will include several SOC constraints that imply the lifted tangent inequalities on different $(x,y)$ domains.  In Figure \ref{FG:ABCDcones}, we illustrate these domains using values of $(\lx,\ly)$ corresponding to each of the regions A--D, with $\lz=0.1$, $\uz=0.7$ as in Figure  \ref{FG:domains}.  In the figure, the boundaries of
 domains on which different SOC constraints imply the lifted tangent inequalities are given by solid black lines, and
 blue lines indicate the region $(x,y)\ge (\lx,\ly)$.

\begin{figure}[h!]
    \centering
   \includegraphics[height=5in]{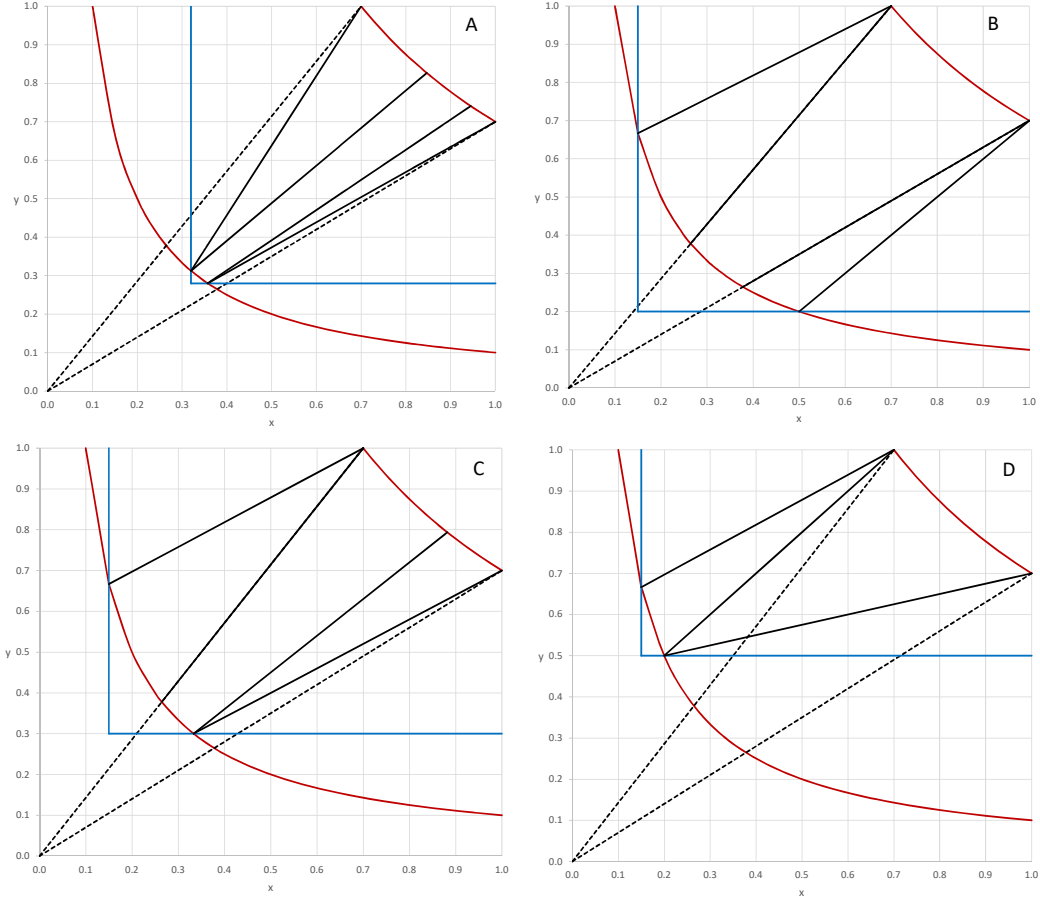}
   \caption{Domains for SOC constraints with $\lz=0.1$, $\uz=0.7$}
   \label{FG:ABCDcones}
\end{figure}

\begin{lemma}\label{LEM:regionA}
Suppose that $(\lx,\ly)$ is in region A. Then $\conv(\Fcalprime)$ is given by the the RLT constraints, the bounds
$\lz\le z\le \uz$, and the following three SOC constraints, each applicable in a different region:
\begin{enumerate}\baselineskip16pt
\item The constraint \eqref{EQ:LBUBcenter}, applicable if
$y\ge (\ly^2/\lz)x$, $y\le (\lz/\lx^2)x$.
\item The constraint \eqref{EQ:UBgenSOC}, but with $\lx$ replaced by $\lz/\ly$, applicable if
$y\le (\ly^2/\lz)x$.
\item The constraint \eqref{EQ:UBgenSOC}, but with $\ly$ replaced by $\lz/\lx$, applicable if
$y\ge (\lz/\lx^2)x$.
\end{enumerate}
\end{lemma}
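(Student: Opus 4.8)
The plan is to apply, sector by sector, the same reasoning already used in Propositions \ref{LEM:LBUB} and \ref{LEM:UBgen}. Since $\conv(\Fcalprime)$ is given by the RLT constraints, the bounds $\lz\le z\le\uz$ and the lifted tangent inequalities \cite{belotti,belotti2}, it suffices to sort the lifted tangent inequalities by the type of construction that produces them and, on each associated $(x,y)$ domain, to replace the whole family by one equivalent SOC constraint. The three domains in the statement are the angular sectors through the origin cut out by the rays $y=(\ly^2/\lz)x$ and $y=(\lz/\lx^2)x$; because $\lx\ly<\lz$ we have $\ly^2/\lz<\lz/\lx^2$, so these rays are correctly ordered and partition the feasible part of the positive quadrant into a ``center'' sector and two ``side'' sectors.

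First I would identify the active construction on the center sector. The radial construction of Proposition \ref{LEM:LBUB} pairs a point $(\xstar,\ystar,\lz)$ on $xy=\lz$ with its image $(\xbar,\ybar)=\rho(\xstar,\ystar)$ on $xy=\uz$, where $\rho=\sqrt{\uz/\lz}$. Writing $\xstar=t$, $\ystar=\lz/t$, both endpoints lie in the box exactly when $t\in[\lx,\lz/\ly]$, and a short check shows that this is the binding interval precisely when $\lx\ge\sqrt{\lz\uz}$ and $\ly\ge\sqrt{\lz\uz}$, i.e.\ in region A. The two extremes $t=\lx$ and $t=\lz/\ly$ produce the rays $y=(\lz/\lx^2)x$ and $y=(\ly^2/\lz)x$, so the radial construction is active exactly on the center sector; there the derivation of the first constraint of Proposition \ref{LEM:LBUB} applies unchanged and yields \eqref{EQ:LBUBcenter}, which is already known to be globally valid.

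Next I would treat the side sector $y\le(\ly^2/\lz)x$. Here the lower endpoint can no longer move radially without leaving the box, so it is clipped at the point where $xy=\lz$ meets $y=\ly$, namely $(\xstar,\ystar)=(\lz/\ly,\ly)$ at height $\lz$, while $(\xbar,\ybar,\uz)$ varies along $xy=\uz$. This is the upper-bound construction of Proposition \ref{LEM:UBgen} with the fixed lower corner moved to $(\lz/\ly,\ly)$; running that computation verbatim amounts to replacing $\lx$ by $\lz/\ly$ in \eqref{EQ:UBgen}, hence in \eqref{EQ:UBgenSOC}. Note that $(\lz/\ly)\cdot\ly=\lz$ automatically turns the term $(z-\lx\ly)^2$ into $(z-\lz)^2$, consistent with the corner height. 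That the substituted constraint is a genuine SOC constraint follows as in Proposition \ref{LEM:UBgen}: on this sector the box bound $y\ge\ly$ forces $x\ge\lz/\ly$, and together with the lower bound $z\ge\lz$ this makes both factors on the right-hand side of the substituted \eqref{EQ:UBgenSOC} nonnegative. The sector $y\ge(\lz/\lx^2)x$ is symmetric, with the corner $(\lx,\lz/\lx)$ and the substitution $\ly\to\lz/\lx$.

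Finally I would verify that the three sectors exhaust the feasible $(x,y)$ range and that the constructions match along the dividing rays, making the piecewise description complete and consistent; continuity holds because at $y=(\ly^2/\lz)x$ the extreme radial endpoint $(\xstar,\ystar)$ coincides with the clipped corner $(\lz/\ly,\ly)$, and similarly on the other ray. I expect the main obstacle to be the second step: correctly deciding, sector by sector, which lifted-tangent construction is active, and checking that the region-A hypotheses $\lx,\ly\ge\sqrt{\lz\uz}$ are exactly what force the radial range to be $[\lx,\lz/\ly]$ rather than being clipped instead by the curve $xy=\uz$ at $\sqrt{\lz\uz}$ or $\sqrt{\lz/\uz}$, and keep all the relevant endpoints inside the box. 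Once the active construction is fixed on each sector, every remaining step is an instance of an already-proved proposition together with a single corner substitution.
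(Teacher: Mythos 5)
Your proposal is correct and follows exactly the route the paper intends: the paper omits the proof of Proposition \ref{LEM:regionA} but its surrounding discussion confirms your dissection, with the radial construction of Proposition \ref{LEM:LBUB} active on the center sector bounded by the rays through $(\lx,\lz/\lx)$ and $(\lz/\ly,\ly)$ (yielding \eqref{EQ:LBUBcenter}), and the construction of Proposition \ref{LEM:UBgen} with the corner moved to $(\lz/\ly,\ly)$, respectively $(\lx,\lz/\lx)$, on the two side sectors (yielding \eqref{EQ:UBgenSOC} with the stated substitutions). Your supporting checks---that $\lx\ly<\lz$ orders the rays, that region A's hypotheses $\lx,\ly\ge\sqrt{\lz\uz}$ are precisely what keep the radial interval $[\lx,\lz/\ly]$ unclipped by $xy=\uz$, and that $x\ge\lz/\ly$, $y\ge\ly$, $z\ge\lz$ make both right-hand factors of the substituted \eqref{EQ:UBgenSOC} nonnegative---are all accurate and fill in the details the paper leaves to the reader.
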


Note that the first constraint in \samb{Proposition} \ref{LEM:regionA} is exactly the constraint based on $(l_z,u_z)$ from
\samb{Proposition} \ref{LEM:LBUB}. This constraint is globally valid and is binding in the region $y\ge (\ly^2/\lz)x$, $y\le (\lz/\lx^2)x$.
The second constraint corresponds to using the lower bounds
$(\lz/\ly,\ly)$ in \samb{Proposition} \ref{LEM:UBgen}, and is certainly then valid for all $(x,y)\ge (\lz/\ly,\ly)$, where $\lz/\ly>\lx$ by
assumption. Note also that $y\le (\ly^2/\lz) x$ and $ y\ge \ly$ together imply that $x\ge \lz/\ly$. Similarly the third constraint is valid for all  $(x,y)\ge (\lx,\lz/\lx)$.  The regions on which the second
and third constraints are actually binding can easily be determined from the points $(\lz/\ly,\ly)$, $(\lx,\lz/\lx)$, $(1,\uz)$ and $(\uz,1)$;
see Figure \ref{FG:ABCDcones}.

For $(\lx,\ly)$ in region B, the representation with lower bounds $(\lx,\ly)>0$ is essentially identical to that
given in \samb{Proposition} \ref{LEM:LBUB}, except that the RLT constraints can now all be active.

\begin{lemma}\label{LEM:regionB}
Suppose that $(\lx,\ly)$ is in region B. Then $\conv(\Fcalprime)$ is given by the RLT constraints, the bounds
$\lz\le z\le \uz$, and the three SOC constraints from \samb{Proposition} \ref{LEM:LBUB}, where each constraint is
applicable for the $(x,y)$ values as given in \samb{Proposition} \ref{LEM:LBUB}.
\end{lemma}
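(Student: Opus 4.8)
The plan is to reduce the claim to the already-completed analysis of the case $\lx=\ly=0$ in Proposition~\ref{LEM:LBUB}. By \cite{belotti,belotti2}, $\conv(\Fcalprime)$ is cut out by the RLT constraints \eqref{EQ:RLT}, the bounds $\lz\le z\le\uz$, and the lifted tangent inequalities for these bounds. It therefore suffices to show (i) that each of the three SOC constraints of Proposition~\ref{LEM:LBUB} is valid on $\conv(\Fcalprime)$, and (ii) that, restricted to the domains assigned in Proposition~\ref{LEM:LBUB} and combined with the RLT constraints, they imply every lifted tangent inequality for $\Fcalprime$. The matrices $M_1,M_2\gesem 0$ and the SOC form of the two side constraints depend only on $\lz\le\uz$, so nothing there changes with positive $\lx,\ly$.

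For (i), I would observe that the present feasible region is exactly the $\lx=\ly=0$ set of Proposition~\ref{LEM:LBUB} (same $\lz,\uz$) intersected with the half-spaces $x\ge\lx$ and $y\ge\ly$; by monotonicity of the convex hull, $\conv(\Fcalprime)$ is contained in the hull computed there. The center constraint \eqref{EQ:LBUBcenter} is globally valid, and the two side constraints built from $M_1,M_2$ are valid on $y\le\uz x$ and $x\le\uz y$ by Proposition~\ref{LEM:LBUB}; validity is thus inherited by the smaller set. The RLT constraints and the bounds on $z$ are valid by construction.

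For (ii), I would invoke the construction of Section~\ref{SEC:LTI}: every lifted tangent inequality for $\Fcalprime$ is tight on a segment joining a feasible point $(\xstar,\ystar,\lz)$ with $\xstar\ystar=\lz$, $\xstar\ge\lx$, $\ystar\ge\ly$, to a point $(\xbar,\ybar,\uz)$ that is either $\rho(\xstar,\ystar)$ (center) or one of the corners $(1,\uz)$ or $(\uz,1)$ (the two sides). The defining inequalities of region B, namely $\lx\le\sqrt{\lz\uz}$ and $\ly\le\sqrt{\lz\uz}$, are precisely what keep all center-type endpoints feasible: on the center domain the endpoints $(\xstar,\ystar)$ sweep the arc of $xy=\lz$ with both coordinates in $[\sqrt{\lz\uz},\sqrt{\lz/\uz}]$, whose smallest coordinate value $\sqrt{\lz\uz}$ dominates $\lx$ and $\ly$. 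Since the center SOC constraint is tight on exactly these segments (as derived in Proposition~\ref{LEM:LBUB}), it implies the corresponding lifted tangent inequalities on the center domain. For the side domains the $M_1$ (resp.\ $M_2$) constraint is tight on every segment running from a point of $xy=\lz$ to $(1,\uz)$ (resp.\ $(\uz,1)$), so it implies all side-type lifted tangent inequalities—including those whose $xy=\lz$ endpoint is forced by feasibility to be the curve endpoint $(\lz/\ly,\ly)$ or $(\lx,\lz/\lx)$.

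The main obstacle is the exactness bookkeeping near the faces $x=\lx$ and $y=\ly$, where some $(x,y)$ in a side domain admit \emph{no} lifted tangent inequality, since the segment through them would require an endpoint on $xy=\lz$ with $\ystar<\ly$ or $\xstar<\lx$. On these sub-regions the roof of $\conv(\Fcalprime)$ is instead furnished by the RLT constraints \eqref{EQ:RLTx} and \eqref{EQ:RLTy}, which—together with \eqref{EQ:RLTbottom} as a lower facet—become active once $\lx,\ly>\lz$, in contrast to Proposition~\ref{LEM:LB} (cf.\ Proposition~\ref{LEM:LBgen}). One must check that the still-valid side SOC constraints do not cut into $\conv(\Fcalprime)$ there, so that the minimum of the applicable SOC bound and the RLT upper bounds reproduces the true upper envelope; the transition curve is the image of the segment from $(\lz/\ly,\ly,\lz)$ (resp.\ $(\lx,\lz/\lx,\lz)$) to $(1,\uz,\uz)$ (resp.\ $(\uz,1,\uz)$), and verifying continuity of the roof across it is the crux. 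This is exactly the redundancy verification performed at the close of the proof of Proposition~\ref{LEM:LBUB}, now carried out with positive lower bounds.
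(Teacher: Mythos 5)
Your overall route is the one the paper intends: the paper omits the proof of Proposition~\ref{LEM:regionB} precisely because it reduces to the lifted-tangent-inequality machinery of Section~\ref{SEC:LTI} and the SOC derivations of Proposition~\ref{LEM:LBUB}, and your steps (i) and (ii) --- validity by inheritance from the $\lx=\ly=0$ hull, plus the observation that $\lx,\ly\le\sqrt{\lz\uz}$ keeps every center-type endpoint on the arc of $xy=\lz$ feasible --- are exactly the right ingredients. One sharpening worth making explicit: the defining inequalities of region B are equivalent to the statement that the curve endpoints $(\lz/\ly,\ly)$ and $(\lx,\lz/\lx)$ lie \emph{inside} the side domains ($\ly\le\sqrt{\lz\uz} \iff \ly\le\uz\cdot(\lz/\ly)$), so the reversed construction of Section~\ref{SEC:LTI} never anchors a fan of segments at a curve endpoint pointing to multiple points of $xy=\uz$; every lifted tangent inequality of the restricted problem is therefore a member of one of the three families already present in Proposition~\ref{LEM:LBUB}. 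This is the fact that distinguishes region B from regions A, C, D, where such fans do arise and produce the \eqref{EQ:UBgenSOC}-type cone.

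Where you stop short is your final paragraph: the ``exactness bookkeeping near the faces $x=\lx$ and $y=\ly$'' that you call the crux is flagged but never carried out, and in fact it is not needed in the pointwise roof-matching form you describe. Once you know (a) each SOC constraint, on its domain, coincides with the upper boundary of the $\lx=\ly=0$ hull and is therefore dominated there by \emph{every} inequality valid for that larger hull --- in particular by every lifted tangent inequality of the restricted problem, since by the observation above these form subfamilies of the unrestricted ones --- and (b) each SOC constraint is valid for $\conv(\Fcalprime)$ by inheritance, the characterization of \cite{belotti,belotti2} ($\conv(\Fcalprime)=$ RLT $+$ bounds $+$ lifted tangent inequalities, with the RLT constraints written for the actual $\lx,\ly$) closes the argument immediately: any point satisfying RLT, the bounds, and the domain-restricted SOC constraints satisfies all lifted tangent inequalities, hence lies in $\conv(\Fcalprime)$, with no continuity-of-the-roof verification across the transition segments. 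The RLT constraints \eqref{EQ:RLTbottom}--\eqref{EQ:RLTy} supplying the roof near those faces is then automatic rather than something to check. A small misattribution: the redundancy discussion you cite appears at the close of the proof of Proposition~\ref{LEM:LB}, not Proposition~\ref{LEM:LBUB}, and it concerns the opposite phenomenon (RLT constraints becoming redundant when $\lx=\ly=0$).
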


For $(\lx,\ly)$ in region C, the representation with lower bounds $(\lx,\ly)>0$ uses a mixture of the SOC constraints that appear
in \samb{Propositions} \ref{LEM:regionA} and \ref{LEM:regionB}.

\begin{lemma}\label{LEM:regionC}
Suppose that $(\lx,\ly)$ is in region C. Then $\conv(\Fcalprime)$ is given by the RLT constraints, the bounds
$\lz\le z\le \uz$, and the following three SOC constraints,  each applicable on a different region:
\begin{enumerate}\baselineskip16pt
\item The constraint $(z+\sqrt{{\lz}{\uz}})^2\le (\sqrt{{\lz}}+\sqrt{{\uz}})^2xy$, applicable if
$y\ge (\ly^2/\lz)x$, $y\le x/\uz$.
\item The constraint \eqref{EQ:UBgenSOC}, but with $\lx$ replaced by $\lz/\ly$, applicable if
$y\le (\ly^2/\lz)x$.
\item The constraint $\sqrt{(\xhat,\yhat)M_2(\xhat,\yhat)\tran}\le
x+{\uz}y-2z$, where $\xhat\coloneqq {\uz}-x$, $\yhat\coloneqq1-y$ and $M_2\gesem 0$ is given in \eqref{EQ:M1M2}, applicable if $x\le {\uz} y$.
\end{enumerate}
\end{lemma}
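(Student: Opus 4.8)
The plan is to reduce the claim to the earlier SOC constructions via the Belotti--Miller characterization \cite{belotti,belotti2}, under which $\conv(\Fcalprime)$ is cut out by the RLT constraints, the bounds $\lz\le z\le\uz$, and the lifted tangent inequalities. It therefore suffices to show that the three listed SOC constraints are valid for $\conv(\Fcalprime)$ and that, on their respective $(x,y)$-domains, they are together equivalent to the whole family of lifted tangent inequalities. As in Proposition \ref{LEM:regionA}, I would not redo the algebra relating $\alpha$ to $(x,y)$; instead I would identify, in each of the three domains, the endpoints $(\xstar,\ystar,\lz)$ and $(\xbar,\ybar,\uz)$ of the supporting segment and then quote the corresponding computation from Proposition \ref{LEM:LBUB} or Proposition \ref{LEM:UBgen}.

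The first task is purely geometric: locate the portions of the curves $xy=\lz$ and $xy=\uz$ lying in the box $[\lx,1]\times[\ly,1]$ and determine where radial scaling is replaced by a pinned endpoint. Using the region-C inequalities $\lz\le\lx\le\sqrt{\lz\uz}$ and $\sqrt{\lz\uz}\le\ly\le\sqrt{\lz/\uz}$, one checks that the feasible part of $xy=\lz$ runs from the box-edge point $(\lx,\lz/\lx)$ to $(\lz/\ly,\ly)$ and that the upper corner $(\uz,1)$ is feasible. The ray through $(\lz/\ly,\ly)$ has slope $\ly^2/\lz$ and the ray through $(\uz,1)$ has slope $1/\uz$; the bound $\ly\le\sqrt{\lz/\uz}$ gives $\ly^2/\lz\le 1/\uz$, so these two rays are correctly ordered and cut the feasible cone of $(x,y)$ into the three stated domains.

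On the center domain $(\ly^2/\lz)x\le y\le x/\uz$ both endpoints are interior radial images, $(\xbar,\ybar)=\sqrt{\uz/\lz}\,(\xstar,\ystar)$, so the first computation in the proof of Proposition \ref{LEM:LBUB} applies verbatim and yields the constraint $(z+\sqrt{\lz\uz})^2\le(\sqrt{\lz}+\sqrt{\uz})^2xy$, which is globally valid by \eqref{EQ:LBUBcenter}. On the domain $y\le(\ly^2/\lz)x$ the lower endpoint is pinned at $(\lz/\ly,\ly)$, whose product is exactly $\lz$; treating this point as the lower corner in the construction of Proposition \ref{LEM:UBgen} (where a fixed lower corner is joined to varying upper-curve points) reproduces \eqref{EQ:UBgenSOC} with $\lx$ replaced by $\lz/\ly$, and validity follows as in that proof from $z\ge\ly x$ and $z\ge(\lz/\ly)y$. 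On the domain $x\le\uz y$ the upper endpoint is pinned at $(\uz,1)$ while the lower endpoint sweeps $xy=\lz$; where an admissible segment exists the box bound $\lx$ stays inactive, so the third computation in Proposition \ref{LEM:LBUB} applies unchanged, giving the $M_2$ constraint with $\xhat=\uz-x$, $\yhat=1-y$, where $M_2\gesem 0$ because $\lz\le\uz$.

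I expect the main obstacle to be exactly the pinning bookkeeping in the previous paragraph, rather than any new algebra. Concretely, I must confirm that $\ly>\sqrt{\lz\uz}$ forces the low-$y$ side to switch from the $M_1$ form used in Propositions \ref{LEM:LBUB} and \ref{LEM:regionB} to the general lower-corner form \eqref{EQ:UBgenSOC}, since the radial preimage on $xy=\lz$ of a point with $y\le(\ly^2/\lz)x$ would fall below $y=\ly$ and thus pin at $(\lz/\ly,\ly)$; that $\lx\le\sqrt{\lz\uz}$ leaves the $\lx$ bound inactive on the low-$x$ sweep so that constraint~3 genuinely coincides with the $\lx=0$ case; that the three constraints agree along the shared rays $y=(\ly^2/\lz)x$ and $x=\uz y$, so that the piecewise description is continuous; and that near the box edge $x=\lx$, where no admissible lower endpoint with $z=\lz$ exists, the binding upper bound on $z$ is supplied by the RLT constraints rather than by the SOC constraint, which remains valid but slack there.
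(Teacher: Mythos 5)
Your proposal is correct and takes essentially the route the paper intends: the paper omits proofs of Propositions \ref{LEM:regionA}--\ref{LEM:regionD}, saying only that they follow from the SOC representations of lifted tangent inequalities constructed earlier, and your pinning analysis (radial endpoints in the center sector, lower endpoint pinned at $(\lz/\ly,\ly)$ when $y\le(\ly^2/\lz)x$, upper endpoint pinned at $(\uz,1)$ when $x\le\uz y$) matches the paper's own commentary, in particular the remark after Proposition \ref{LEM:regionA} that constraint~2 is \eqref{EQ:UBgenSOC} applied with lower bounds $(\lz/\ly,\ly)$, whose product is exactly $\lz$. The region-C inequalities you check ($\ly\ge\sqrt{\lz\uz}$ forcing the pin on the low-$y$ side, $\lx\le\sqrt{\lz\uz}$ keeping the $x$-bound inactive so the $M_2$ computation from Proposition \ref{LEM:LBUB} carries over, and $\ly\le\sqrt{\lz/\uz}$ giving $\ly^2/\lz\le 1/\uz$ so the center sector is nonempty) are exactly the facts that distinguish region C, and all are verified correctly.
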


\begin{figure}
\begin{subfigure}{.5\textwidth}
  \centering
  \includegraphics[width=.9\linewidth]{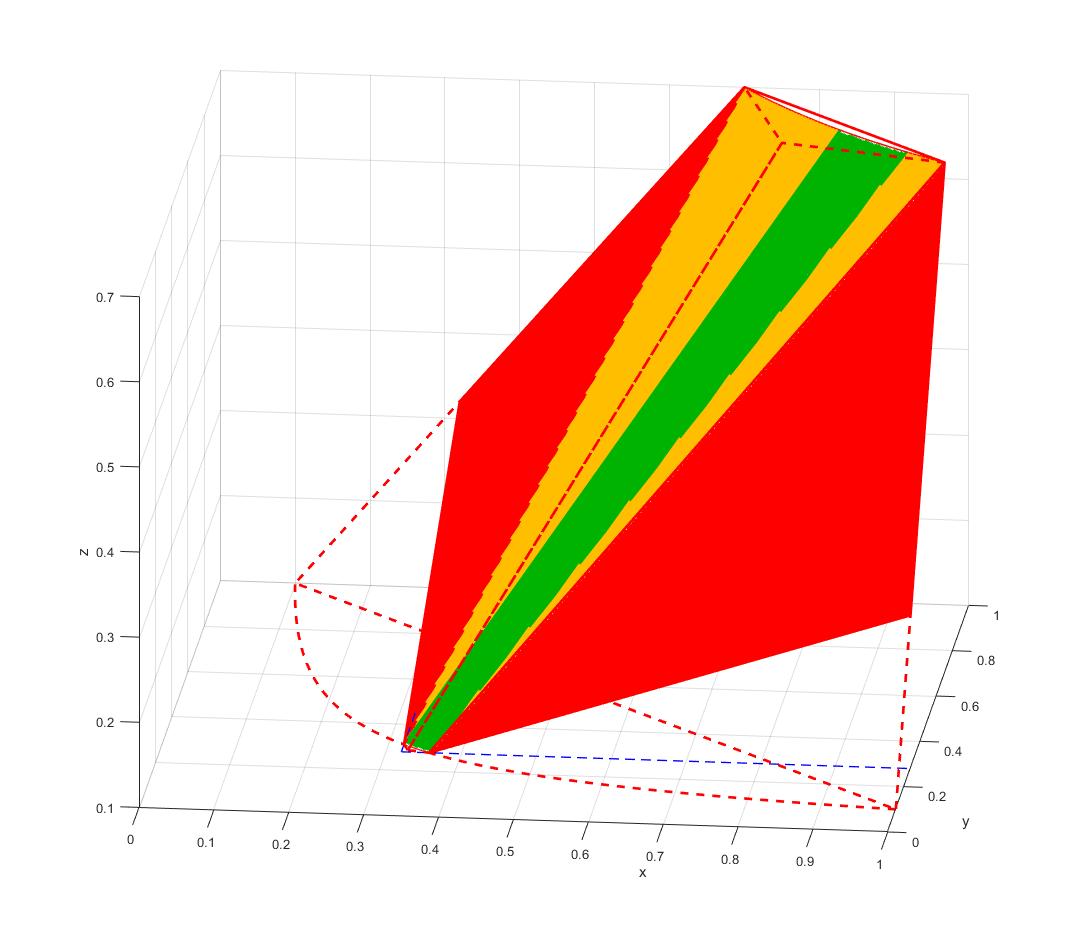}
  \caption{Case A: $l_x=0.32$, $l_y=0.28$}
  \label{fig:3.3A}
\end{subfigure}
\begin{subfigure}{.5\textwidth}
  \centering
  \includegraphics[width=.9\linewidth]{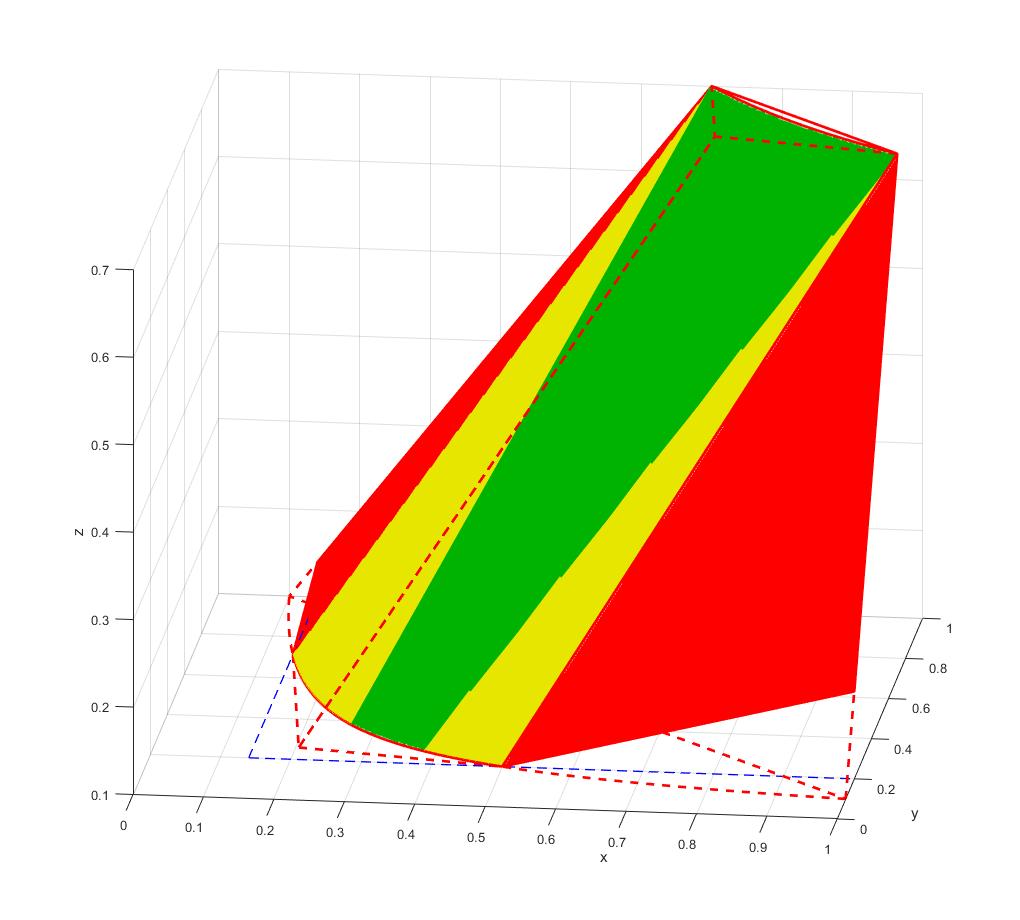}
  \caption{Case B: $l_x=0.14$, $l_y=0.2$}
  \label{fig:3.3B}
\end{subfigure}

\medskip

\begin{subfigure}{.5\textwidth}
  \centering
  \includegraphics[width=.9\linewidth]{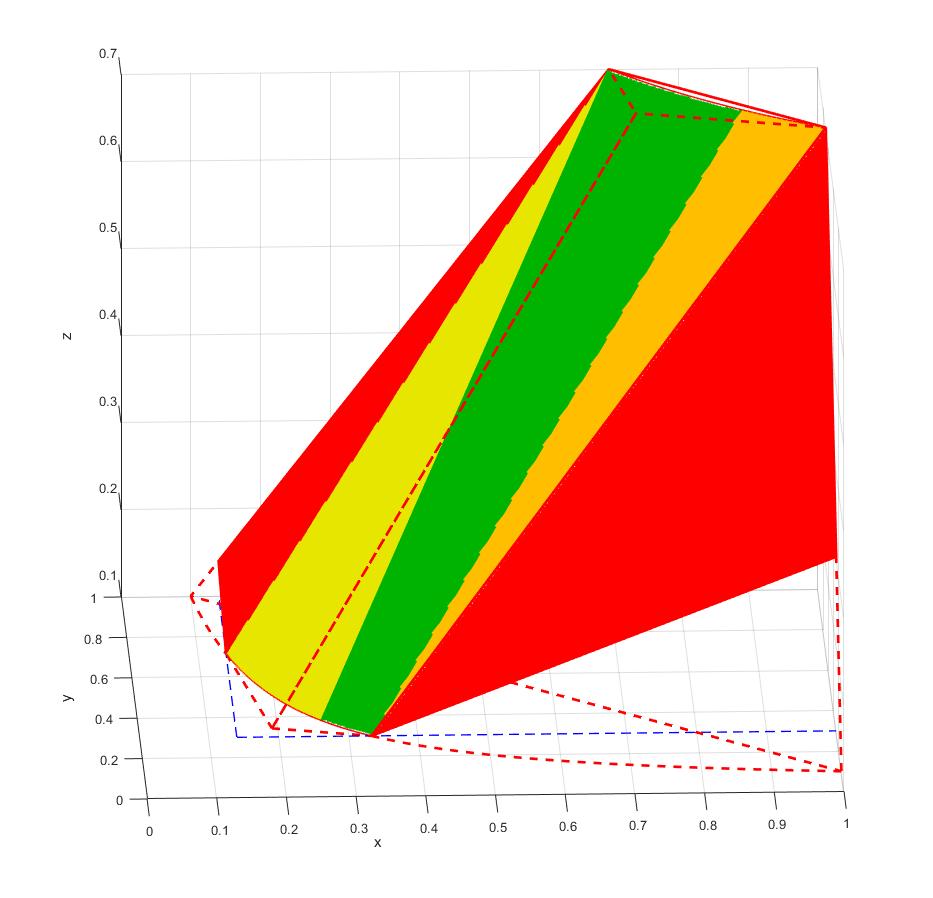}
  \caption{Case C: $l_x=0.14$, $l_y=0.3$}
  \label{fig:3.3C}
\end{subfigure}
\begin{subfigure}{.5\textwidth}
  \centering
  \includegraphics[width=.9\linewidth]{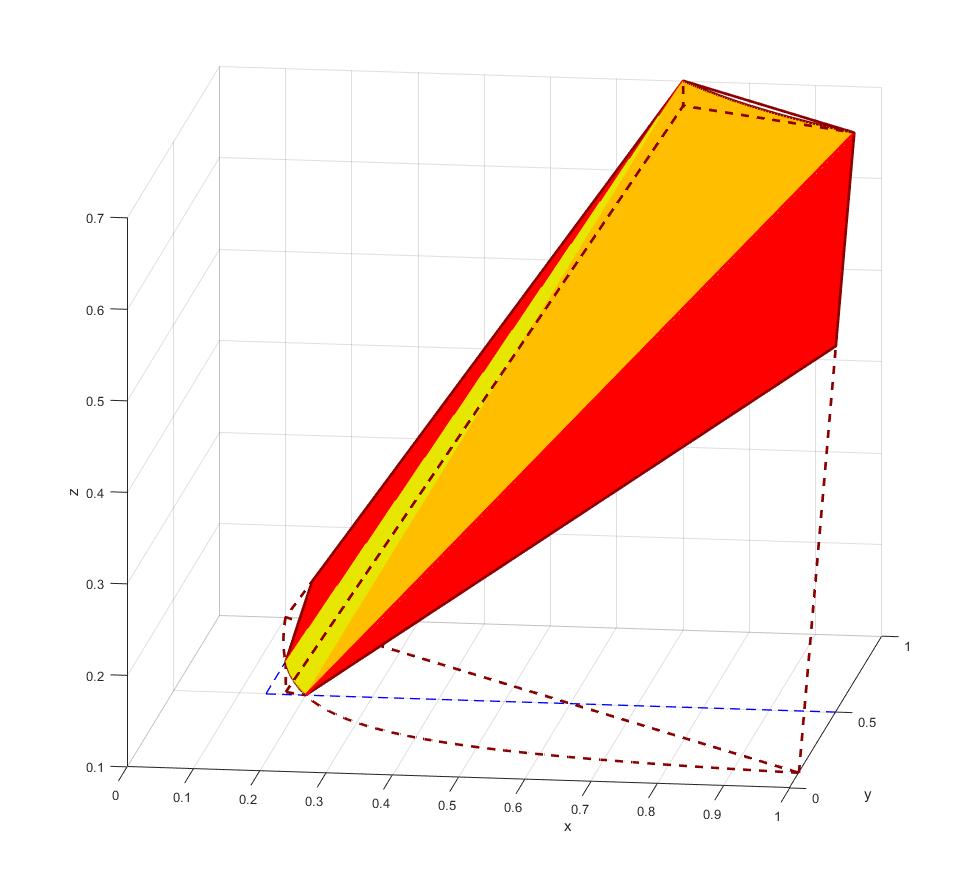}
  \caption{Case D: $l_x=0.14$, $l_y=0.5$}
  \label{fig:3.3D}
\end{subfigure}
\caption{Convex hulls with general lower bounds on $x$ and $y$: $l_z=0.1$, $u_z=0.7$}
\label{fig:3.3}
\end{figure}

The remaining case, where $(\lx,\ly)$ is in region D, is qualitatively different from the three previous cases because
the ``center cone'' from \samb{Proposition} \ref{LEM:LBUB} does not appear in the representation of $\conv(\Fcalprime)$.
There are only two SOC cones in the representation, and the boundary between the regions on which these cones are active is not homogeneous.  This boundary is given by the line which joins the points $(\lz/\ly,\ly)$ and $(\uz,1)$,
whose equation is $y=a+bx$, where
\begin{equation}\label{EQ:Dline}
a=\frac{\uz\ly^2-\lz}{\uz\ly-\lz},\quad b=\frac{\ly(1-\ly)}{\uz\ly-\lz}.
\end{equation}

\begin{lemma}\label{LEM:regionD}
Suppose that $(\lx,\ly)$ is in region D. Then $\conv(\Fcalprime)$ is given by the the RLT constraints, the bounds
$\lz\le z\le \uz$, and the following two SOC constraints,  each applicable in a different region:
\begin{enumerate}
\item The constraint \eqref{EQ:UBgenSOC}, but with $\lx$ replaced by $\lz/\ly$, applicable in the region
$y\le a + bx$, where $a$ and $b$ are given by \eqref{EQ:Dline}.
\item The constraint $\sqrt{(\xhat,\yhat)M_2(\xhat,\yhat)\tran}\le
x+{\uz}y-2z$, where $\xhat\coloneqq ({\uz}-x)$, $\yhat\coloneqq(1-y)$ and $M_2\gesem 0$ is given in \eqref{EQ:M1M2}, applicable if $y\ge a + bx$, where $a$ and $b$ are given by \eqref{EQ:Dline}.
\end{enumerate}
\end{lemma}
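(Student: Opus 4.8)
The plan is to follow the same template used for Propositions~\ref{LEM:LBUB}, \ref{LEM:UBgen} and the earlier region results: invoke the Belotti--Miller characterization that $\conv(\Fcalprime)$ equals the RLT constraints together with the bounds $\lz \le z \le \uz$ and the family of lifted tangent inequalities, and then show that in region~D this family is \emph{exactly} captured by the two SOC constraints in the statement, each on its indicated subdomain. Concretely, I would first argue that in region~D the lifted tangent inequalities fall into only two classes — those anchored at the lower-curve corner $(\lz/\ly,\ly,\lz)$ and those anchored at the upper-curve corner $(\uz,1,\uz)$ — and that the ``center'' (scaling) class present in Proposition~\ref{LEM:LBUB} and in region~A is absent.

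Second, I would establish the absence of the center class from the defining inequality $\ly \ge \sqrt{\lz/\uz}$ of region~D. On the feasible portion of the curve $xy=\lz$ inside the box, $y \ge \ly \ge \sqrt{\lz/\uz}$ forces $x = \lz/y \le \lz/\ly \le \sqrt{\lz\uz}$, equivalently $x < \uz y$ (with equality only in the degenerate endpoint $\ly = \sqrt{\lz/\uz}$). Hence the scaling partner $\rho(\xstar,\ystar)$, $\rho=\sqrt{\uz/\lz}$, of any lower-curve point is infeasible, so every lower-curve tangent point lies in the region $x \le \uz y$ of Proposition~\ref{LEM:LBUB} and must connect to the corner $(\uz,1,\uz)$; symmetrically, the scaling partner of any upper-curve point has $y$-coordinate $\sqrt{\lz/\uz}\,\ybar \le \sqrt{\lz/\uz} < \ly$, violating $y \ge \ly$, so every upper-curve tangent point connects to the corner $(\lz/\ly,\ly,\lz)$, the bottom endpoint of the truncated curve $xy=\lz$. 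This pins down the two anchors and, in particular, shows that neither the $M_1$-type corner $(1,\uz)$ nor the other lower-curve corner $(\lx,\lz/\lx)$ is used.

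Third, I would realize each class as an SOC constraint using earlier derivations. For the family anchored at $(\lz/\ly,\ly,\lz)$ I would reuse the computation behind Proposition~\ref{LEM:UBgen} verbatim, but with the corner $(\lx,\ly,\lx\ly)$ replaced by $(\lz/\ly,\ly,\lz)$; this is legitimate because $\lz/\ly \ge \lx$ (since $\lx\ly \le \lz$) and $(\lz/\ly)\ly = \lz$, and it yields constraint~\eqref{EQ:UBgenSOC} with $\lx$ replaced by $\lz/\ly$. For the family anchored at $(\uz,1,\uz)$ I would reuse the computation in case~3 of Proposition~\ref{LEM:LBUB}, which depends only on the fixed point $(\uz,1,\uz)$ and the curve $xy=\lz$ (not on $\lx,\ly$), giving the $M_2$ constraint. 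The single inequality tight along the segment joining the two corners $(\lz/\ly,\ly,\lz)$ and $(\uz,1,\uz)$ is common to both families; its projection onto the $(x,y)$-plane is the line through $(\lz/\ly,\ly)$ and $(\uz,1)$, and a direct check (slope $\ly(1-\ly)/(\uz\ly-\lz)$, $y=\ly$ at $x=\lz/\ly$) shows this is exactly $y = a+bx$ with $a,b$ as in~\eqref{EQ:Dline}. This identifies the transition and assigns the first constraint to $y \le a+bx$ and the $M_2$ constraint to $y \ge a+bx$.

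Finally, I would record that both SOC constraints are globally valid for $\conv(\Fcalprime)$ — the first by the validity argument of Proposition~\ref{LEM:UBgen} (each factor on the right-hand side of~\eqref{EQ:UBgenSOC} is nonnegative on the feasible region), the second as in Proposition~\ref{LEM:LBUB} — so that the RLT constraints, the bounds on $z$, and the two SOC constraints together give precisely $\conv(\Fcalprime)$. I expect the main obstacle to be the rigorous justification of the domain partition: namely, confirming that as the tangent point sweeps each curve the chosen corner anchor is \emph{exactly} the claimed one throughout, that the switch between the two families occurs precisely along the connecting segment, and that no further lifted tangent inequalities (anchored at $(\lx,\lz/\lx)$ or $(1,\uz)$, or of scaling type) are needed to describe $\conv(\Fcalprime)$ in region~D.
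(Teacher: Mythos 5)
Your overall route is exactly the one the paper intends: the paper in fact \emph{omits} the proofs of Propositions \ref{LEM:regionA}--\ref{LEM:regionD}, saying only that they follow from the SOC representations of the lifted tangent inequalities developed earlier, and your reconstruction fills in precisely those details. Your classification of the inequalities in region D is correct and well argued: on the truncated curve $xy=\lz$ the bound $\ly\ge\sqrt{\lz/\uz}$ forces $x\le\uz y$, so the scaling (``center'') class and the $M_1$ corner $(1,\uz)$ are absent, every lower-curve point anchors at $(\uz,1,\uz)$ (giving the $M_2$ constraint, whose derivation in Proposition \ref{LEM:LBUB} indeed does not involve $\lx,\ly$), every upper-curve point anchors at $(\lz/\ly,\ly,\lz)$ (giving \eqref{EQ:UBgenSOC} with $\lx\to\lz/\ly$, legitimate since $(\lz/\ly)\ly=\lz$ and $\lz/\ly\ge\lx$), the corner $(\lx,\lz/\lx)$ is never used, and the transition segment projects to the line $y=a+bx$ of \eqref{EQ:Dline}, which you verify correctly.

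There is, however, one genuine error in your last paragraph: the claim that both SOC constraints are \emph{globally} valid for $\conv(\Fcalprime)$ is false, and the paper says so explicitly right after the proposition (``neither of the constraints in Proposition \ref{LEM:regionD} is globally valid; the first is valid for $x\ge\lz/\ly$ and the second is valid for $y\ge x/\uz$''). The validity argument of Proposition \ref{LEM:UBgen} needs $x\ge\lx$ pointwise, and after the substitution $\lx\to\lz/\ly$ this becomes $x\ge\lz/\ly$, which can fail in region D since $\lx<\lz/\ly$. Concretely, with $\lz=0.1$, $\uz=0.7$, $\lx=0.14$, $\ly=0.5$ (the paper's case-D example), the point $(x,y,z)=(0.15,\,0.8,\,0.12)\in\Fcalprime$ violates \eqref{EQ:UBgen} with $\lx$ replaced by $\lz/\ly=0.2$: the left side is $(0.045)(-0.04)=-0.0018$ while the right side is $0.7(-0.05)(0.3)=-0.0105$, and the first right-hand factor of \eqref{EQ:UBgenSOC} is $0.7(-0.05)+0.2(0.045)=-0.026<0$. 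The fix is cheap and restores your argument: the proposition only requires each constraint to be valid on its \emph{applicable} region, and both applicable regions sit inside the respective validity regions. Indeed, a feasible point with $y\le a+bx$ must have $x\ge\lz/\ly$ (if $x<\lz/\ly$ then $xy\ge\lz$ forces $y\ge\lz/x>\ly>a+bx$, since $b>0$ and the line passes through $(\lz/\ly,\ly)$), and a feasible point with $y\ge a+bx$ satisfies $x\le\uz y$ (both lines pass through $(\uz,1)$ and $b\le 1/\uz$ in region D, since $\uz\ly^2\ge\lz$ there). With that correction, and with the ``main obstacle'' you flag at the end handled by your anchor analysis (the scaled partners of all upper-curve points have $y$-coordinate at most $\sqrt{\lz/\uz}\le\ly$, so the anchor never switches), your proof is complete and matches the paper's intended argument.
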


In Figure \ref{fig:3.3}, we illustrate examples of $\conv(\Fcalprime)$ for $l_z=0.1$, $u_z=0.7$ and
values of $l_x,l_y$ in each of the four regions A--D shown in Figure  \ref{FG:domains}.

Note that neither of the constraints in \samb{Proposition} \ref{LEM:regionD} is globally valid; the first is valid for $x\ge \lz/\ly$ and the second is valid for $y\ge x/\uz$. In fact,
all of the representations in this section involve some SOC constraints that are not globally valid. In order to represent
$\conv(\Fcalprime)$ over the entire set of feasible $(x,y)$ in any of these cases, one could use a disjunctive representation as
described at the end of Section \ref{SS:LBUB}.  Alternatively, we could always use the SOC constraint \eqref{EQ:LBUBcenter} together with the constraint \eqref{EQ:UBgenSOC} since both of these are globally valid.
 We would expect that these two SOC constraints together with the RLT constraints and bounds on $z$ would give a close approximation of  $\conv(\Fcalprime)$ in many cases.

 \section{Conclusion}\label{SEC:Conclusion}

 We have shown that in all cases, $\conv(\Fcalprime)$ can be represented using a combination of RLT constraints,  bound(s)
 on the product variable $z$ and no more than three SOC constraints.  In cases where more than one SOC constraint is required
 to represent $\conv(\Fcalprime)$, each such constraint is applicable on a subset of the domain of $(x,y)$ values, but
 one or two globally valid SOC constraints can be used together with the RLT constraints and bounds on $z$  to approximate
 $\conv(\Fcalprime)$.

Our results suggest a number of promising directions for future reserach.  First, it may be possible to extend some of these results to the
 case of multilinear terms, where $z$ is the product of $n>2$ variables; an extension of the
 lifted tangent inequalities to $n>2$ is described in \cite{belotti}. Second, it would be interesting to extend the convex hull description for the complete 5-variable
 system in \cite{AB} to allow for bounds on the product $xy$.  Note that the the results of \cite{AB} already apply to arbitrary bounds
 $0\le \lx\le x\le \ux$, $0\le \ly\le y\le \uy$, and bounds on the squared terms $x^2$ and/or $y^2$ are equivalent to bounds on the
 original variables $(x,y)$.  However, the results of \cite{AB} do not allow for an additional bound on the product $xy$. Finally, an extension of the results here to the case of $z=x\tran y$, where $x\in\Rbb_+^n$, $y\in\Rbb_+^n$, would be very significant
 since such bilinear terms appear in many applications.

 \section*{Acknowledgement}

Kurt Anstreicher would like to thank Pietro Belotti, Jeff Linderoth and Mohit Tawarmalani for very helpful conversations on the topic of this paper.

\bibliographystyle{abbrv}
\bibliography{xyz}

\end{document}